\DeclareMathAlphabet{\pazocal}{OMS}{zplm}{m}{n} 
\DeclareSymbolFont{yhlargesymbols}{OMX}{yhex}{m}{n} 
\DeclareMathAccent{\yhwidehat}{\mathord}{yhlargesymbols}{"62}
\newtheorem{theorem}{Theorem}[section]
\newtheorem{proposition}[theorem]{Proposition}
\newtheorem{corollary}[theorem]{Corollary}
\theoremstyle{definition}
\newtheorem{definition}[theorem]{Definition}
\newtheorem{remark}[theorem]{Remark}
\setlist[enumerate,1]{label={\upshape(\arabic*)},ref=\arabic*} 
\def\KHaus{{\mathbf{KHaus}}}
\def\Stone{{\mathbf{Stone}}}
\def\DLat{\mathbf{BDL}}
\def\Heyt{\mathbf{HA}}
\def\Frm{\mathbf{Frm}}
\newcommand{\BA}{\mathbf{BA}}
\newcommand{\ED}{{\mathbf{ED}}}
\newcommand{\cBA}{{\mathbf{cBA}}}
\newcommand{\MT}{{\mathbf{MT}}}
\newcommand{\CBA}{{\mathbf{CBA}}}
\newcommand{\CHA}{{\mathbf{CHA}}}
\newcommand{\Set}{{\mathbf{Set}}}
\newcommand{\C}{\mathbf{C}}
\newcommand{\E}{\mathbf{E}}
\newcommand{\LPries}{\mathbf{LSp_{st}}}
\newcommand{\FrmBL}{\Frm_{\mathbf{BL}}}
\newcommand{\HeytBL}{\Heyt_{\mathbf{BL}}}
\newcommand{\FrmHA}{\Frm_{\mathbf{HA}}}
\newcommand{\KHausR}{\mathbf{KHausR}}
\newcommand{\Pries}{\mathbf{Pries}}
\newcommand{\StoneC}{\mathbf{StoneCR}}
\newcommand{\Esast}{\mathbf{Esa_{st}}}
\newcommand{\KHausT}{\mathbf{KHausR_T}}
\newcommand{\KHausB}{\mathbf{KHausR_B}}
\newcommand{\KHausTB}{\mathbf{KHausR_{TB}}}
\newcommand{\StoneCT}{\mathbf{StoneC_T}}
\newcommand{\StoneCKfour}{\mathbf{StoneC_{K4}}}
\newcommand{\StoneCSfour}{\mathbf{StoneCQ}}
\newcommand{\StoneCTB}{\mathbf{StoneC_{TB}}}
\newcommand{\StoneCB}{\mathbf{StoneC_{B}}}
\newcommand{\StoneCSfive}{\mathbf{StoneC_{S5}}}
\newcommand{\CBAO}{\mathbf{CBAO}_{\mathbf{st}}}
\newcommand{\cBAO}{\mathbf{cBAO}_{\mathbf{st}}}
\newcommand{\BAO}{\mathbf{BAO}_{\mathbf{st}}}
\newcommand{\CA}{\mathbf{CA}_{\mathbf{st}}}
\newcommand{\Kfour}{\mathbf{K4}_{\mathbf{st}}}
\newcommand{\MA}{\mathbf{S5}_{\mathbf{st}}}
\newcommand{\B}{\mathbf{B}_{\mathbf{st}}}
\newcommand{\T}{\mathbf{T}_{\mathbf{st}}}
\newcommand{\TB}{\mathbf{TB}_{\mathbf{st}}}
\newcommand{\CKfour}{\mathbf{CK4}_{\mathbf{st}}}
\newcommand{\CMA}{\mathbf{CS5}_{\mathbf{st}}}
\newcommand{\CB}{\mathbf{CB}_{\mathbf{st}}}
\newcommand{\CT}{\mathbf{CT}_{\mathbf{st}}}
\newcommand{\CTB}{\mathbf{CTB}_{\mathbf{st}}}
\newcommand{\cKfour}{\mathbf{cK4}_{\mathbf{st}}}
\newcommand{\cMA}{\mathbf{cS5}_{\mathbf{st}}}
\newcommand{\cB}{\mathbf{cB}_{\mathbf{st}}}
\newcommand{\cT}{\mathbf{cT}_{\mathbf{st}}}
\newcommand{\MTst}{\mathbf{MT}_{\mathbf{st}}}
\newcommand{\cTB}{\mathbf{cTB}_{\mathbf{st}}}
\newcommand{\N}{\mathbb{N}}
\newcommand{\tworef}{\mathbf{2}}
\newcommand{\twoirr}{
{
\bf 2_{\tikz[baseline=-0.5ex]{
\node[circle,draw,inner sep=2pt,fill=black] (a) at (0,0) {};
}}
}
}
\newcommand{\tworefsym}{
{
\bf 2^d_{\tikz[baseline=-0.5ex]{
\node[circle,draw,inner sep=2pt,fill=white] (a) at (0,0) {};
}}
}
}
\newcommand{\twoirrsym}{
{
\bf 2^d_{\tikz[baseline=-0.5ex]{
\node[circle,draw,inner sep=2pt,fill=black] (a) at (0,0) {};
}}
}
}
\newcommand{\oneirr}{
{
\bf 1_{\tikz[baseline=-0.5ex]{
\node[circle,draw,inner sep=2pt,fill=black] (a) at (0,0) {};
}}
}
}
\tikzstyle{none}=[inner sep=0pt]
\tikzstyle{black dot}=[fill=black, draw=black, shape=circle, minimum size=5pt, inner sep=0]
\tikzstyle{small black dot}=[fill=black, draw=black, shape=circle, minimum size=0.75pt, inner sep=0]
\tikzstyle{white dot}=[fill=white, draw=black, shape=circle, minimum size=7pt, inner sep=0]
\tikzstyle{big black dot}=[fill=black, draw=black, shape=circle, minimum size=7pt, inner sep=0]
\tikzstyle{none dashed}=[dashed, -]
\tikzstyle{to}=[->]
\tikzstyle{doubleto}=[<->]
\setlist[enumerate,1]{label={\upshape(\arabic*)},ref=\arabic*} 
\title{On the lack of colimits in various categories arising in pointfree topology and algebraic logic}
\author{Marco Abbadini}
\address{University of Birmingham, School of Computer Science,
B15 2TT Birmingham (UK)}
\email{m.abbadini@bham.ac.uk}
\author{Guram Bezhanishvili}
\address{Department of Mathematical Sciences\\
New Mexico State University\\
Las Cruces NM 88003\\
USA}
\email{guram@nmsu.edu}
\author{Luca Carai}
\address{Department of Mathematics ``Federigo Enriques''\\
University of Milan\\
20133 Milan\\
Italy}
\email{luca.carai.uni@gmail.com}
\date{}
\begin{document}

\subjclass[2020]{18C05; 08C05; 18F70; 06E25; 06D20; 06D22; 18F60} 
\keywords{Boolean algebra with an operator; Heyting algebra; frame; Stone duality; complete category; cocomplete category; variety; quasi-variety; prevariety}

\begin{abstract}
    We prove that the category of McKinsey-Tarski algebras is not equivalent to a variety of algebras, thus answering a question of Peter Jipsen in the negative. More generally, we show that various categories of BAOs (boolean algebras with an operator), Heyting algebras, and frames with appropriate morphisms between them are not cocomplete.
    As a consequence, none of these categories is equivalent to a prevariety, let alone a variety.
\end{abstract}
\maketitle

\tableofcontents

\section{Introduction}

Boolean algebras and Heyting algebras are some of the most studied classes of algebras in algebraic logic. It is well known that both classes are closed under homomorphic images, subalgebras, and products, hence form a variety. This allows the use of powerful tools from universal algebra in their study. The situation changes when we restrict our attention to the categories $\CBA$ of complete boolean algebras with complete boolean morphisms and $\CHA$ of complete Heyting algebras with complete Heyting morphisms.
Indeed, it is a classic result from the 1960s that the free complete boolean algebra on countably many generators does not exist \cite{Gai64,Hal64}. De Jongh \cite{DeJ80} proved that already the two-generated free complete Heyting algebra does not exist (see also \cite{BK24}). Thus, neither $\CBA$ nor $\CHA$ is equivalent to a variety.
Natural generalizations of varieties are quasi-varieties (classes of algebras closed under isomorphisms, subalgebras, products, and ultraproducts), which further generalize to prevarieties (classes of algebras closed under isomorphisms, subalgebras, and products). 
It is well known that each prevariety viewed as a category is complete and cocomplete (see, e.g., 
\cite[Thm.~IV.2.1.3 and IV.2.2.3]{SR99}). Therefore, each category that is not cocomplete is not equivalent to a prevariety.
Consequently, neither $\CBA$ nor $\CHA$ is equivalent to even a prevariety.

There are other natural morphisms to consider between complete boolean algebras and between complete Heyting algebras. For example, we can look at the category of complete boolean algebras with boolean morphisms between them, as well as the category of complete Heyting algebras with Heyting morphisms, or, more generally, bounded lattice morphisms between them. We show that none of these categories is equivalent to a prevariety, and further generalize this to prove that the category of Heyting algebras with bounded lattice morphisms between them is also not equivalent to a prevariety.

In addition, we look at BAOs (boolean algebras with an operator $\Diamond$) and show that a similar phenomenon occurs there. The morphisms we consider are \emph{stable morphisms} (that is, boolean morphisms $f \colon A\to B$ satisfying $\Diamond f(a) \le f(\Diamond a)$ for each $a\in A$), which play an important role in the study of the finite model property in modal logic \cite{Ghi10,BBI16}. Such morphisms also play an important role in the study of \emph{The Algebra of Topology} of McKinsey and Tarski \cite{MT44,BR23}. Our article is motivated by the question of Peter Jipsen, who asked, at the \href{https://sites.google.com/view/frametheoryseminar/frame-theory-workshop?authuser=0}{Frame Theory Workshop} (Chapman University, December 2024), whether the category $\MT$ of McKinsey-Tarski algebras with complete stable morphisms is (equivalent to) a variety. We answer this question in the negative by showing that $\MT$ is not equivalent even to a prevariety. In fact, we show that neither $\MT$ nor the category of McKinsey-Tarski algebras with stable morphisms
is equivalent to a prevariety. These results generalize to various categories of complete BAOs with these two types of morphisms, as well as to various categories of BAOs with stable morphisms. 

We briefly describe the flavor of our results. In Section~\ref{sec: CBAOs} we show that $\MT$ lacks some countable copowers, and hence is not equivalent to a prevariety. We then generalize this result to various categories of complete BAOs with complete stable morphisms. 
In Section~\ref{sec: cBA} we prove that dropping the completeness assumption from the above notion of morphism results in categories that lack even some binary copowers. 
In Section~\ref{sec: 3.2} we show that dropping the completeness assumption also from the objects results in categories that lack some coequalizers. 
Finally, in Section~\ref{sec: 5} we prove that the category of frames with Heyting morphisms lacks some binary copowers, and so does the category of frames with bounded lattice morphisms. Furthermore, we prove that the category of Heyting algebras with bounded lattice morphisms lacks some coequalizers.

In Sections~\ref{sec: cBA} to \ref{sec: 5} our main tool is duality theory: Stone duality for boolean algebras, J\'onsson-Tarski duality for BAOs, and Esakia and Priestley dualities for Heyting algebras and bounded distributive lattices. Dualizing the fact that prevarieties are cocomplete, it is then sufficient to show that the dual categories in question are not complete. This we do by showing that in these categories either products or equalizers do not exist.

\section{MT-algebras}\label{sec: CBAOs}

In this section, we show that $\MT$ is not cocomplete.
As a consequence, it is not equivalent to a variety, resolving the question of Peter Jipsen in the negative. We also show that the same technique applies to several other categories of complete BAOs with complete stable morphisms, yielding that they are not cocomplete, and hence not equivalent to a prevariety.

We recall (see \cite{JT51}) that an \emph{operator} on a boolean algebra $B$ is a unary function $\Diamond \colon B \to B$ preserving finite joins, and that the pair $(B,\Diamond)$ is called a \emph{boolean algebra with an operator} or simply a \emph{BAO}. 
A \emph{BAO-morphism} between two BAOs $A$ and $B$ is a boolean morphism $f \colon A \to B$ such that $f(\Diamond a)=\Diamond f(a)$ for each $a\in A$. 
We let $\mathbf{BAO}$ denote the category of BAOs and BAO-morphisms.

We will mainly be interested in \emph{stable} morphisms between BAOs \cite{BBI16}; that is, boolean morphisms $f \colon A \to B$ such that $\Diamond f(a) \le f(\Diamond a)$ for each $a\in A$. Stable morphisms are also known as \emph{continuous morphisms} and play an important role in the study of axiomatization, finite model property, and decidability of modal logics \cite{Ghi10,BBI16}. 

An important class of BAOs is formed by closure algebras of McKinsey and Tarski \cite{MT44}. We recall that a \emph{closure algebra} is a BAO $(B,\Diamond)$ satisfying $a \le \Diamond a$ and $\Diamond\Diamond a \le \Diamond a$ for each $a \in B$. Closure algebras play a prominent role in modal logic as they serve as algebraic models of $\mathbf{S4}$, one of the most studied modal systems (see, e.g., \cite{RS70}). Because of this, closure algebras are also known as $\mathbf{S4}$-algebras.
Complete closure algebras provide an alternate pointfree approach to topology, and were coined McKinsey-Tarski algebras in \cite{BR23}.
We thus arrive at the main definition of this section:

\begin{definition}
    A \emph{McKinsey-Tarski algebra}, or simply an \emph{MT-algebra}, is a complete closure algebra. An \emph{MT-morphism} between MT-algebras is a complete boolean morphism that is stable. Let $\MT$ denote the category of MT-algebras and MT-morphisms. 
\end{definition}

Our aim is to show that $\MT$ lacks some countable copowers. For this, we utilize the following well-known result from the 1960s. Let $\mathbf{BA}$ denote the category of boolean algebras and boolean morphisms, and $\CBA$ the category of complete boolean algebras and complete boolean morphisms.

\begin{theorem}[\cite{Gai64,Hal64}] \label{thm: Gai Hal}
    The free countably generated complete boolean algebra does not exist. 
    Thus, $\CBA$ is not cocomplete, and hence is not equivalent to a prevariety.
\end{theorem}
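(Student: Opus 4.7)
The plan is to invoke the classical Gaifman--Hales result for the first clause and then derive the remaining two clauses by purely categorical reasoning. For the first clause, the standard argument is by contradiction: assuming a free countably generated complete boolean algebra $F$ existed, every complete boolean algebra that is generated, as a complete boolean algebra, by a countable subset would be a quotient of $F$ and hence of cardinality at most $|F|$. To reach a contradiction one exhibits complete boolean algebras of arbitrarily large cardinality that are generated in the complete sense by countably many elements; Hales does this via partition algebras associated with large index sets, while Gaifman uses a different model-theoretic construction. I would not reprove this and would simply cite \cite{Gai64,Hal64}.

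The passage from the nonexistence of the free object to the failure of cocompleteness is short. The four-element boolean algebra $\mathbf{2}^2$ is free on one generator in $\mathbf{BA}$; being finite it is complete, and any boolean morphism out of a finite complete boolean algebra into a complete boolean algebra automatically preserves all joins, so $\mathbf{2}^2$ is also free on one generator in $\CBA$. If $\CBA$ admitted all small colimits, then in particular the copower $\coprod_{n \in \omega} \mathbf{2}^2$ would exist in $\CBA$, and a routine check of universal properties shows this copower is free on countably many generators in $\CBA$, contradicting Gaifman--Hales. Hence $\CBA$ lacks some small colimits.

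Finally, every prevariety, regarded as a category, is cocomplete, as recalled in the introduction from \cite{SR99}, and cocompleteness is manifestly preserved under categorical equivalence. Consequently $\CBA$ cannot be equivalent to a prevariety. The only genuine obstacle in the whole proof is the Gaifman--Hales theorem itself; once that is in hand, the remaining steps are formal translations between universal properties and colimits.
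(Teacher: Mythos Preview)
Your argument is correct, but note that the paper does not actually prove this theorem: it is stated with a bare citation to \cite{Gai64,Hal64} as a well-known result, with no proof given for any of the three clauses. The derivation you sketch for the ``Thus'' and ``hence'' clauses is exactly right, and in fact the paper deploys the very same idea one theorem later (\cref{thm: MT not cocomplete}): it observes there that $\BA$ and $\CBA$ share finitely generated free objects, so the four-element boolean algebra is free on one generator in $\CBA$, and hence an $\omega$-indexed copower of it would be free on $\omega$ generators. So your reasoning matches the paper's, just placed at a different point in the exposition; the paper simply packages the whole statement of \cref{thm: Gai Hal} as a citation and defers the copower argument to where it is needed in greater generality.
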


We clearly have the forgetful functor $\mathcal{U} \colon \MT \to \CBA$. We start by proving that it has both left and right adjoints. Following \cite{Hal56}, for each boolean algebra $B$, we consider the so-called \emph{simple operator} $\Diamond_s \colon B \to B$ given by
\[
    \Diamond_s a=
    \begin{cases}
        0 & \text{if } a = 0\\
        1 & \text{if } a \ne 0
    \end{cases}
\]
for each $a \in B$.
It is straightforward to see that $(B,\Diamond_s)$ is a closure algebra, and hence an MT-algebra whenever $B$ is complete.

\begin{theorem} \label{t:adjoints}
    The forgetful functor $\mathcal{U} \colon \MT \to \CBA$ is both a left and right adjoint, and hence preserves both colimits and limits.
\end{theorem}

\begin{proof}
    We define the functor $\mathcal{L} \colon \CBA \to \MT$ as follows. On objects, $\mathcal{L}(A) = (A, \Diamond_s)$; and on morphisms, $\mathcal{L}$ is the identity.
    We prove that, for $A \in \CBA$ and $B \in \MT$, we have 
    \[
    \hom_{\MT}(\mathcal{L} (A),B) \cong \hom_{\CBA}(A,\mathcal{U}(B)) \]
    where the bijection is the identity.
    The inclusion $\subseteq$ is clear since each $\MT$-morphism is a complete boolean morphism.
    For the reverse inclusion, suppose $f \colon A \to \mathcal{U}(B)$ is a complete boolean morphism. We claim that it is a stable morphism from $\mathcal{L}(A)$ to $B$, i.e.\ that $\Diamond f(a) \leq f(\Diamond_s a)$.
    If $a = 0$, then 
    \[
    \Diamond f(0) = \Diamond 0 = 0 = f(0) = f(\Diamond_s 0).
    \]
    If $a \neq 0$, then 
    \[
    \Diamond f(a) \leq 1 = f(1) = f(\Diamond_s a).
    \]
    Thus, $f$ is an $\MT$-morphism.
    
    We next define the functor $\mathcal{R} \colon \CBA \to \MT$ as follows.
    On objects, $\mathcal{R}(A) = (A, \Diamond_i)$, where $\Diamond_i$ is the identity on $A$; and on morphisms, $\mathcal{R}$ is the identity. Since $(A,\Diamond_i)$ is an MT-algebra, $\mathcal R$ is well defined. 
    We prove that, for $A \in \CBA$ and $B \in \MT$, we have
    \[
        \hom_{\MT}(B, \mathcal{R}(A)) = \hom_{\CBA}(\mathcal{U}(B),A).
    \]
    The inclusion $\subseteq$ is clear because each $\MT$-morphism is a complete boolean morphism.
    For the reverse inclusion, suppose $g \colon \mathcal{U}(B) \to A$ is a $\CBA$-morphism. Since $(B,\Diamond)$ is an MT-algebra, $a \leq \Diamond a$ for every $a \in B$. Therefore, $g$ is a stable morphism from $B$ to $\mathcal{R}(A)$ because, for every $a \in B$, we have $\Diamond_i g(a) = g(a) \leq g(\Diamond a)$. Thus, $g$ is an $\MT$-morphism.
\end{proof}

\begin{theorem} \label{thm: MT not cocomplete}
    Let $\C$ be a category and $\mathcal{U} \colon \C \to \CBA$ a colimit-preserving functor.
    If there is $X \in \C$ such that $\mathcal{U}(X)$ is the four-element boolean algebra, then $\C$ lacks some countable copowers.
\end{theorem}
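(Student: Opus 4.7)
The plan is to argue by contradiction: assume that the countable copower $\coprod_{n \in \N} X$ exists in $\C$, and show that this forces the existence of a free complete boolean algebra on countably many generators in $\CBA$, directly contradicting \cref{thm: Gai Hal}.

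The key preliminary observation I would establish is that the four-element boolean algebra $\mathbf{2}^2$ serves as the free complete boolean algebra on one generator in $\CBA$. Indeed, it is the free boolean algebra on one generator in $\BA$, and since it is finite (hence complete) with only finite joins to preserve, every boolean morphism out of $\mathbf{2}^2$ into a complete boolean algebra is automatically a $\CBA$-morphism. Combining this with the universal property of coproducts, for any $C \in \CBA$ the complete boolean morphisms $\coprod_{n \in \N} \mathbf{2}^2 \to C$ correspond bijectively to countable families of elements of $C$. Hence, if this coproduct exists at all in $\CBA$, it must realise the free complete boolean algebra on countably many generators.

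With this in hand, I would then invoke the colimit-preservation hypothesis on $\mathcal{U}$: if $\coprod_{n \in \N} X$ existed in $\C$, then
\[
\mathcal{U}\!\left(\coprod_{n \in \N} X\right) \;\cong\; \coprod_{n \in \N} \mathcal{U}(X) \;=\; \coprod_{n \in \N} \mathbf{2}^2
\]
would exist in $\CBA$. By the previous paragraph, this gives a free countably generated complete boolean algebra, contradicting \cref{thm: Gai Hal}. The only delicate point is the identification of the coproduct of countably many copies of $\mathbf{2}^2$ with a free object; this is a routine universal-property computation, so the real weight of the argument is borne by the colimit preservation of $\mathcal{U}$ together with the classical Gaifman--Hales theorem.
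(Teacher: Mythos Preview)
Your proposal is correct and follows essentially the same approach as the paper: both argue that the four-element boolean algebra is free on one generator in $\CBA$, so a countable copower of it would be free on $\omega$ generators, and then use colimit preservation of $\mathcal{U}$ to derive a contradiction with \cref{thm: Gai Hal}. Your version is slightly sharper in that it pinpoints the specific copower $\coprod_{n\in\N} X$ that must fail, whereas the paper phrases the contradiction hypothesis as ``all countable copowers exist,'' but the substance is identical.
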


\begin{proof}
    Suppose all countable copowers exist in $\C$.
    Since $\mathcal{U} \colon \C \to \CBA$ preserves colimits, there exists a countably-indexed copower $C$ of $\mathcal{U}(X)$ in $\CBA$.
    Because $\BA$ and $\CBA$ share finitely generated free objects (see, e.g., \cite[p.~33]{Joh82}), the four-element boolean algebra $\mathcal{U}(X)$ is the free object on one generator in $\CBA$, and so $C$ is the free object on $\omega$ generators in $\CBA$, contradicting Theorem~\ref{thm: Gai Hal}. 
\end{proof}

As an immediate consequence of Theorems~\ref{t:adjoints} and \ref{thm: MT not cocomplete} we obtain: 

\begin{theorem} \label{thm: MT lack copowers}
    $\MT$ lacks some countable copowers, and hence is not equivalent to a prevariety.
\end{theorem}

Apart from closure algebras, there are other classes of BAOs that play a prominent role in modal logic (see, e.g., \cite{CZ97}). We conclude the section by showing how to extend Theorem~\ref{thm: MT lack copowers} to those classes. 

\begin{definition}\ \label{def: algebras}
    \begin{enumerate}
        \item A \emph{$\mathbf{T}$-algebra} is a BAO $(B,\Diamond)$ such that $a \le \Diamond a$ for each $a \in B$. 
        \item A \emph{$\mathbf{K4}$-algebra} is a BAO $(B,\Diamond)$ such that $\Diamond\Diamond a \le \Diamond a$ for each $a \in B$.
        \item A \emph{$\mathbf{B}$-algebra} is a BAO $(B,\Diamond)$ such that $\Diamond\neg\Diamond a \le\neg a$ for each $a \in B$.
        \item A \emph{$\mathbf{TB}$-algebra} is a BAO which is both a $\mathbf{T}$-algebra and a $\mathbf{B}$-algebra.
        \item An \emph{$\mathbf{S5}$-algebra} is a BAO which is both a closure algebra and a $\mathbf{B}$-algebra.
    \end{enumerate}
\end{definition}
As the name suggests, $\mathbf{T}$-algebras serve as algebraic models of the modal system $\mathbf{T}$, and the same applies to $\mathbf{K4}$-algebras, $\mathbf{B}$-algebras, $\mathbf{TB}$-algebras, and $\mathbf{S5}$-algebras (the latter were introduced by Halmos \cite{Hal56} under the name of monadic algebras).

\begin{definition} \label{def: categories of BAOs} \
    \begin{enumerate}
        \item Let $\bf BAO_{st}$ denote the category of BAOs and stable morphisms between them.
        \item Let $\CA$ denote the full subcategory of $\BAO$ consisting of closure algebras.
        \item Let $\T$, $\Kfour$, $\B$, $\TB$, and $\MA$ denote the full subcategories of $\BAO$ consisting of the algebras introduced in Definition~\ref{def: algebras}.
    \end{enumerate}
\end{definition}

We have the following inclusion relationships between these categories:

\[\begin{tikzcd}[sep=small, column sep=tiny]
	&& \BAO \\[6ex]
	\\
	\Kfour && \T && \B \\
	\\
	& \CA && \TB \\
	\\
	&& \MA
	\arrow[hook, from=3-1, to=1-3]
	\arrow[hook, from=3-3, to=1-3]
	\arrow[hook', from=3-5, to=1-3]
	\arrow[hook', from=5-2, to=3-1]
	\arrow[hook, from=5-2, to=3-3]
	\arrow[hook', from=5-4, to=3-3]
	\arrow[hook, from=5-4, to=3-5]
	\arrow[hook', from=7-3, to=5-2]
	\arrow[hook, from=7-3, to=5-4]
\end{tikzcd}\]

We let $\CBAO$ denote the category of complete BAOs with complete stable morphisms between them, and define $\CT$, $\CKfour$, $\CB$, $\CTB$, and $\CMA$ similarly (observe that ${\bf CCA_{st}} = \MT$). We have the following analog of Theorem~\ref{thm: MT lack copowers}.

\begin{theorem} \label{thm: lack of copowers}
    Each of the categories $\CBAO$, $\CT$, $\CKfour$, $\CB$, $\CTB$, and $\CMA$ lacks some countable copowers. Consequently, none is equivalent to a prevariety.
\end{theorem}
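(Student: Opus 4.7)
The plan is to apply \cref{thm: MT not cocomplete} directly, using \cref{t:adjoints} to supply the colimit-preservation hypothesis and exhibiting an explicit lift of the four-element boolean algebra into each of the seven categories.

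First, let $\C$ be any of $\CBAO$, $\CT$, $\CKfour$, $\CB$, $\CTB$, $\MT$, or $\CMA$. By \cref{t:adjoints} the forgetful functor $\mathcal{U} \colon \C \to \CBA$ is a left adjoint (it is both a left and right adjoint), hence it preserves colimits. To invoke \cref{thm: MT not cocomplete} it remains to provide an object $X \in \C$ with $\mathcal{U}(X)$ equal to the four-element boolean algebra $\mathbf{2}^2$. I take $X = (\mathbf{2}^2, \Diamond_s)$, where $\Diamond_s$ is the simple operator from the preamble to \cref{t:adjoints}. A brief case check on the four elements shows that $\Diamond_s$ makes $\mathbf{2}^2$ into a monadic algebra: it satisfies $a \le \Diamond_s a$ (since $\Diamond_s$ sends every nonzero element to $1$), $\Diamond_s \Diamond_s a = \Diamond_s a$ (so in particular $\Diamond_s \Diamond_s a \le \Diamond_s a$), and $\Diamond_s \neg \Diamond_s a \le \neg a$ (check $a=0$ and $a\neq 0$ separately). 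Consequently $X$ lies in $\CMA$, and therefore in every one of the seven categories listed.

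With these two ingredients in place, \cref{thm: MT not cocomplete} yields that $\C$ lacks some countable copowers. The final clause follows from the cocompleteness of prevarieties quoted in the introduction: any category equivalent to a prevariety is cocomplete, hence possesses all countable copowers, so none of the seven categories can be equivalent to a prevariety. There is no real obstacle here; all of the substantive work has already been carried out in \cref{t:adjoints,thm: MT not cocomplete,thm: Gai Hal}, and the only verification needed is the routine check that $(\mathbf{2}^2,\Diamond_s)$ sits in each of the categories under consideration.
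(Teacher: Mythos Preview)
Your proposal is correct and follows essentially the same approach as the paper: both invoke \cref{t:adjoints} to obtain colimit-preservation of the forgetful functor, observe that $(\mathbf{2}^2,\Diamond_s)$ is a monadic algebra (hence lies in every listed category), and then apply \cref{thm: MT not cocomplete}. The paper records this as an immediate consequence in one sentence, while you spell out the verification that $\Diamond_s$ satisfies the monadic axioms, but the logic is identical.
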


\begin{proof}
    Let $\C$ be one of the categories in the statement. Observe that the four-element boolean algebra equipped with $\Diamond_s$ belongs to $\C$. Thus, in view of Theorem~\ref{thm: MT not cocomplete}, it is sufficient to show that the forgetful functor $\mathcal{U} \colon \C \to \CBA$ is both a left and right adjoint. The same proof as in Theorem~\ref{t:adjoints} gives a functor $\mathcal{L} \colon \CBA \to \C$ that is left adjoint to $\mathcal{U}$. To define $\mathcal{R} \colon \CBA \to \C$, we consider cases. 
    
    First suppose that $\C$ is $\CT$, $\CTB$, or $\CMA$. Then the same proof as in Theorem~\ref{t:adjoints} gives a functor $\mathcal{R} \colon \CBA \to \C$ that is right adjoint to $\mathcal{U}$ (since $(A,\Diamond_i) \in \C$ for any $A \in \CBA$). Next suppose that $\C$ is $\CBAO$, $\CB$, or $\CKfour$. Then define $\mathcal{R} \colon \CBA \to \C$ by $\mathcal{R}(A)=(A,\Diamond_0)$, where $\Diamond_0 \colon A \to A$ is given by $\Diamond_0 a = 0$ for each $a\in A$, and on morphisms $\mathcal{R}$ is the identity.
    Since $\mathcal R(A)$ is both a $\mathbf{K4}$-algebra and a $\mathbf{B}$-algebra, $\mathcal R$ is well defined. Moreover, for $A\in\CBA$ and $B\in\C$, we have
    \[
    \hom_{\C}(B, \mathcal{R}(A)) = \hom_{\CBA}(\mathcal{U}(B),A).
    \]
    Indeed, the inclusion $\subseteq$ is clear since each $\C$-morphism is a complete boolean morphism.
    For the reverse inclusion, suppose $g \colon \mathcal{U}(B) \to A$ is a $\CBA$-morphism.
    Then it is a stable morphism from $B$ to $\mathcal{R}(A)$ because, for every $a \in B$,
    \[
    \Diamond_0 g(a) = 0 \leq g(\Diamond a),
    \]
    yielding that $g$ is a $\C$-morphism. Thus, $\mathcal{R} \colon \CBA \to \C$ is right adjoint to $\mathcal{U}$.
\end{proof}

\section{MT-algebras with stable morphisms}\label{sec: cBA}

In this section we show that the situation remains the same if instead of complete stable morphisms we consider all stable morphisms.
We do this by showing that binary powers don't exist on the dual side.
We start with the following general result.

\begin{theorem} \label{thm: general}
    Let $\C$ be a complete category.
    Suppose $\pazocal{H}$ is a class of objects of $\C$ such that every morphism $f \colon X \to Y$ in $\C$ is an isomorphism if, for each $Z \in \pazocal{H}$, $f \circ - \colon \hom_\C(Z, X) \to \hom_\C(Z, Y)$ is a bijection.
    Then the inclusion $\mathbf{E} \hookrightarrow \C$ of any full subcategory $\mathbf{E}$ containing all objects of $\pazocal{H}$ preserves limits.
\end{theorem}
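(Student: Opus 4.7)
The plan is to show that whenever a diagram $D \colon J \to \mathbf{E}$ admits a limit cone $(L, (\pi_j)_{j \in J})$ in $\mathbf{E}$, this cone remains limiting when regarded in $\C$. Since $\C$ is complete, the composite $J \to \mathbf{E} \hookrightarrow \C$ has some limit cone $(L', (\pi'_j))$ in $\C$, and this yields a unique mediating morphism $u \colon L \to L'$ in $\C$ with $\pi'_j \circ u = \pi_j$ for every $j$. Everything then reduces to proving that $u$ is an isomorphism in $\C$, because in that case $(L, (\pi_j))$ is itself a limiting cone in $\C$.

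By the hypothesis on $\pazocal{H}$, it suffices to check that the map $u \circ (-) \colon \hom_\C(Z, L) \to \hom_\C(Z, L')$ is a bijection for every $Z \in \pazocal{H}$. Fix such a $Z$; by assumption $Z$ lies in $\mathbf{E}$. Since $\mathbf{E}$ is full in $\C$, the hom-sets $\hom_\mathbf{E}(Z, W)$ and $\hom_\C(Z, W)$ coincide for every $W \in \mathbf{E}$. One can therefore record two bijections
\[
\hom_\C(Z, L) \longrightarrow \lim_{j \in J} \hom_\C(Z, D(j)) \longleftarrow \hom_\C(Z, L'),
\]
where the left map sends $f \mapsto (\pi_j \circ f)_j$ and comes from $(L, (\pi_j))$ being limiting in $\mathbf{E}$, and the right sends $g \mapsto (\pi'_j \circ g)_j$ and comes from $(L', (\pi'_j))$ being limiting in $\C$. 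The relation $\pi'_j \circ u = \pi_j$ shows that post-composition with $u$ is exactly the composite of the left bijection with the inverse of the right bijection, so $u \circ (-)$ is a bijection.

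Applying the hypothesis on $\pazocal{H}$ to $u$ then forces $u$ to be an isomorphism in $\C$, so $(L, (\pi_j))$ is a limit of $D$ in $\C$, as desired. The only thing that requires care is invoking fullness at the right step, so as to identify $\hom_\mathbf{E}$ with $\hom_\C$ whenever both source and target lie in $\mathbf{E}$; the rest is a standard Yoneda-style transfer, and no serious obstacle is anticipated, since the assumption on $\pazocal{H}$ is precisely what upgrades a pointwise bijection of representable functors into an isomorphism of objects.
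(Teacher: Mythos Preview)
Your proof is correct and follows essentially the same approach as the paper's: both take the limit in $\C$, form the comparison morphism from the $\mathbf{E}$-limit, and verify it is an isomorphism by checking that post-composition with it is a bijection on $\hom_\C(Z,-)$ for each $Z\in\pazocal{H}$, using fullness to identify the relevant hom-sets and the fact that representable functors preserve limits. The only difference is cosmetic---you phrase the key step as a commuting triangle of bijections through $\lim_j \hom_\C(Z,D(j))$, while the paper phrases it as the unique factorization between two limit cones in $\Set$.
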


\begin{proof}
    Let $D \colon \mathbf{I} \to \mathbf{E}$ be a diagram and $(m_i \colon M \to D(i))_{i \in \mathbf{I}}$ a limit cone in $\mathbf{E}$.
    Suppose $(n_i \colon N \to D(i))_{i \in \mathbf{I}}$ is the limit in $\mathbf{C}$, which exists because $\C$ is complete.
    Then there is a unique morphism $f \colon M \to N$ such that for every $i \in \mathbf{I}$ we have $m_i = n_i \circ f$.
    Since $(m_i \colon M \to D(i))_{i \in \mathbf{I}}$, considered as a cone in $\C$, is obtained by composing the limit cone $(n_i \colon N \to D(i))_{i \in \mathbf{I}}$ with $f$ and composing a limit cone with an isomorphism gives a limit cone, to prove that $(m_i \colon M \to D(i))_{i \in \mathbf{I}}$ is a limit cone in $\C$, it is enough to prove that $f$ is an isomorphism.
    
    Fix $Z \in \pazocal{H}$.
    The functors $\hom_\E(Z, -) \colon \E \to \Set$ and $\hom_\C(Z, -) \colon \C \to \Set$ preserve limits because every representable functor does \cite[Cor.~13.9]{AHS06}.\footnote{We recall that a representable functor is a functor that up to natural isomorphism is of the form $\hom(A,-)$ for some object $A$.}
    Therefore,
    \[
    (m_i \circ - \colon \hom_\E(Z,M) \to \hom_\E(Z,D(i)))_{i \in \mathbf{I}}
    \]
    is a limit cone in $\Set$ 
    over the diagram $\hom_\E(Z,-) \circ D \colon \mathbf{I} \to \Set$
    and 
    \[
    (n_i \circ - \colon \hom_\C(Z,N) \to \hom_\C(Z,D(i)))_{i \in \mathbf{I}}
    \]
    is a limit cone in $\Set$ over the diagram $\hom_\C(Z,-) \circ D  \colon \mathbf{I} \to \Set$.
    Since $\E$ is a full subcategory of $\C$, for every $Z  \in \pazocal{H}$ we have that $\hom_\E(Z,M) = \hom_\C(Z,M)$ and $\hom_\E(Z,D(i)) = \hom_\C(Z,D(i))$ for each $i \in \mathbf{I}$. 
    Moreover, we have a function $f \circ -\colon \hom_\C(Z,M) \to \hom_\C(Z, N)$ that maps ${h \colon Z \to M}$ to $f \circ h \colon Z \to N$. We claim that $f \circ -$ is a bijection.
    Since $m_i = n_i \circ f$ for every $i$, the function 
    \[
    m_i \circ - \colon \hom_\C(Z,M) \to \hom_\C(Z,D(i))
    \]
    is the composite 
    \[
        \begin{tikzcd}[column sep=10ex]
            \hom_\C(Z,M) \arrow[r, "f \circ -"] & \hom_\C(Z, N) \arrow[r, "n_i \circ -"] & \hom_\C(Z,D(i)).
        \end{tikzcd}
    \]
    Therefore, $f \circ -$ is the unique factorization of the limit cone
    \[
        (m_i \circ - \colon \hom_\C(Z,M) \to \hom_\C(Z,D(i)))_{i \in \mathbf{I}}
    \]
    through the limit cone
    \[
        (n_i \circ - \colon \hom_\C(Z,N) \to \hom_\C(Z,D(i)))_{i \in \mathbf{I}}.
    \]
    Because both cones are limit cones over the same diagram, $f \circ -$ must be an isomorphism in $\Set$, hence a bijection.
    Thus, it is enough to use the assumption to conclude that $f$ is an isomorphism in $\C$.
\end{proof}

We recall that a \emph{conservative} functor is a functor $\mathcal F \colon \mathbf{A} \to \mathbf{B}$ that reflects isomorphisms, i.e.~such that, for every morphism $f$ in $\mathbf{A}$, if $\mathcal F(f)$ is an isomorphism in $\mathbf{B}$ then $f$ is an isomorphism in $\mathbf{A}$ (see, e.g., \cite[Sec.\ 5.6]{Riehl2016}).

\begin{corollary} \label{cor: general-conservative}
    Let $\C$ be a category with a conservative functor to $\Set$ that is represented by an object $X \in \C$.
    The inclusion $\mathbf{E} \hookrightarrow \C$ of any full subcategory $\mathbf{E}$ containing $X$ preserves limits.
\end{corollary}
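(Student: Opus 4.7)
The plan is to derive this corollary as an immediate consequence of the preceding theorem by taking $\pazocal{H} \coloneqq \{X\}$. Any full subcategory $\mathbf{E}$ of $\C$ containing $X$ then automatically contains every object of $\pazocal{H}$, so the theorem's conclusion applied in this setting is exactly the statement of the corollary.

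The only real task is to verify that the singleton class $\pazocal{H} = \{X\}$ satisfies the hypothesis of the theorem: a morphism $f \colon Y \to Z$ in $\C$ is an isomorphism if and only if $f \circ -\colon \hom_\C(X, Y) \to \hom_\C(X, Z)$ is a bijection. The forward direction I would handle by citing the general fact that every representable functor, in particular $\hom_\C(X,-)$, sends isomorphisms in $\C$ to isomorphisms, and in $\Set$ isomorphisms are exactly bijections. For the converse, let $\mathcal{U} \colon \C \to \Set$ denote the given conservative functor; by assumption there is a natural isomorphism $\mathcal{U} \cong \hom_\C(X,-)$. Hence if $f \circ -$ is a bijection, then $\mathcal{U}(f)$ is an isomorphism in $\Set$, and conservativity of $\mathcal{U}$ upgrades this to $f$ being an isomorphism in $\C$.

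I do not anticipate any serious obstacle: the corollary is a one-step specialisation of the theorem, with ``conservative and represented by $X$'' packaging exactly the property of the singleton class $\{X\}$ that the theorem abstracts. The only bookkeeping needed is the passage, via the naturality of the isomorphism $\mathcal{U} \cong \hom_\C(X,-)$, between ``$\mathcal{U}(f)$ is an isomorphism in $\Set$'' and ``$f \circ -$ is a bijection''.
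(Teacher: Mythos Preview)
Your proposal is correct and follows exactly the paper's approach: the paper's proof is the single sentence ``This is the specialization of \cref{thm: general} to the case of $\pazocal{H}=\{X\}$,'' and you have simply supplied the (entirely straightforward) verification that the hypothesis of that theorem is met by $\pazocal{H}=\{X\}$, using conservativity for one direction and the fact that functors preserve isomorphisms for the other.
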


\begin{proof}
    This is the specialization of Theorem~\ref{thm: general} to the case of $\pazocal{H}=\{X\}$.
\end{proof}

Let $\KHaus$ denote the category of compact Hausdorff spaces and continuous maps. 

\begin{theorem} \label{t:limits-as-in-KH}
    Let $\mathbf{E}$ be a full subcategory of $\KHaus$ containing a singleton. The inclusion $\mathbf{E} \hookrightarrow \KHaus$ preserves limits.
\end{theorem}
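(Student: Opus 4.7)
The plan is to deduce \cref{t:limits-as-in-KH} as a direct application of \cref{cor: general-conservative}, taking $\C = \KHaus$ and $X$ to be a one-point space $1$. The first step is to identify a conservative functor $\KHaus \to \Set$ represented by $1$. The natural candidate is the forgetful functor $|{-}| \colon \KHaus \to \Set$, which is represented by $1$: continuous maps from a one-point space to $Y$ correspond bijectively to points of $Y$, and this correspondence is natural in $Y$.

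The remaining step is to verify that $|{-}|$ is conservative. Suppose $f \colon X \to Y$ is a continuous map between compact Hausdorff spaces whose underlying function is a bijection. I would invoke the classical fact that any continuous bijection from a compact space to a Hausdorff space is automatically a homeomorphism: such a map is closed, because it sends closed (hence compact) subsets of $X$ to compact (hence closed) subsets of $Y$, so the set-theoretic inverse is continuous. Therefore $f$ is an isomorphism in $\KHaus$, confirming that $|{-}|$ reflects isomorphisms.

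With both hypotheses of \cref{cor: general-conservative} verified for $X = 1$, any full subcategory $\mathbf{E} \hookrightarrow \KHaus$ containing a singleton (all singletons are isomorphic in $\KHaus$, so this is unambiguous up to isomorphism) satisfies the assumption, and the inclusion preserves limits. No substantial obstacle is expected: the real conceptual work has already been packaged into \cref{thm: general,cor: general-conservative}, and the proof of \cref{t:limits-as-in-KH} reduces to recalling the standard representability and conservativity of the forgetful functor $\KHaus \to \Set$.
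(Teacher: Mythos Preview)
Your proposal is correct and follows essentially the same approach as the paper: both apply \cref{cor: general-conservative} with $\C = \KHaus$ and $X$ a singleton, using that the forgetful functor $\KHaus \to \Set$ is represented by the singleton and is conservative because continuous bijections between compact Hausdorff spaces are homeomorphisms.
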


\begin{proof}
    We show that $\mathbf{E}$ and $\KHaus$ satisfy the condition of Corollary~\ref{cor: general-conservative} by taking $\mathbf{C} = \KHaus$. Since a continuous map between compact Hausdorff spaces is a homeomorphism if and only if it is a bijection, the forgetful functor $\mathcal U \colon\KHaus \to \Set$ is conservative. Let $\{x\} \in \mathbf{E}$ be a singleton space. There is a natural isomorphism between $\hom_\KHaus(\{x\},-)$ and $\mathcal U$ whose component relative to $Y \in \KHaus$ sends $f \colon \{x\} \to Y$ to $f(x)$. Thus, $\mathcal U \colon \KHaus \to \Set$ is a conservative functor represented by $\{x\}$. Since $\mathbf{E}$ contains $\{x\}$, the result follows from Corollary~\ref{cor: general-conservative}.
\end{proof}

We recall that a {\em Stone space} is a zero-dimensional compact Hausdorff space. Let $\Stone$ denote the full subcategory of $\KHaus$ consisting of Stone spaces. By the celebrated Stone duality, $\BA$ is dually equivalent to $\Stone$.
We call an extremally disconnected compact Hausdorff space an \emph{ED-space}. It is straightforward to see that each ED-space is a Stone space. Let $\ED$ denote the full subcategory of $\Stone$ consisting of ED-spaces.

\begin{theorem}\label{thm: ED lacks products}
    The category $\ED$ lacks some binary powers.
\end{theorem}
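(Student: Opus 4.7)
The plan is to reduce the problem to a topological one and then to produce an explicit counterexample. By \cref{t:limits-as-in-KH}, the inclusion $\ED \hookrightarrow \KHaus$ preserves limits, so any binary power of an object $X \in \ED$ that exists in $\ED$ must coincide with the ordinary topological product $X \times X$ computed in $\KHaus$. Hence the theorem reduces to exhibiting an ED-space $X$ such that $X \times X$ (equipped with the product topology) fails to be extremally disconnected.

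For $X$ I would take $\beta \N$, the Stone--\v{C}ech compactification of the discrete space $\N$. This lies in $\ED$ because its clopen algebra is $\mathcal{P}(\N)$, which is a complete boolean algebra, and a Stone space is extremally disconnected exactly when its clopen algebra is complete.

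The main obstacle is then to show that $\beta \N \times \beta \N$ is not extremally disconnected. My plan is to split the dense discrete subspace $\N \times \N$ of $\beta \N \times \beta \N$ into the two pieces $U = \{(n,m) \in \N \times \N : n \leq m\}$ and $V = \{(n,m) \in \N \times \N : n > m\}$. Since every element of $\N$ is isolated in $\beta \N$, both $U$ and $V$ are open in $\beta \N \times \beta \N$, and they are disjoint by construction. I would then fix any free ultrafilter $p$ on $\N$ and verify that $(p,p) \in \overline{U} \cap \overline{V}$: every basic clopen neighborhood of $(p,p)$ has the form $\overline{A} \times \overline{B}$ with $A, B \in p$, and since $A$ and $B$ are both infinite one readily finds pairs $(n,m) \in A \times B$ with $n \leq m$ as well as pairs with $n > m$. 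Hence $\overline{U}$ and $\overline{V}$ cannot be disjoint, which contradicts extremal disconnectedness and completes the argument.
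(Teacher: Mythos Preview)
Your proposal is correct and follows the same approach as the paper: invoke \cref{t:limits-as-in-KH} to reduce to showing that some product of ED-spaces in $\KHaus$ is not ED, and use $\beta\N \times \beta\N$ as the witness. The only difference is that the paper simply cites \cite[p.~97]{GJ60} for the fact that $\beta\N \times \beta\N$ is not extremally disconnected, whereas you supply a direct argument via the disjoint open sets $U$ and $V$ whose closures meet at any diagonal point $(p,p)$ with $p$ a free ultrafilter.
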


\begin{proof}
    This is an immediate consequence of Theorem~\ref{t:limits-as-in-KH} and the well-known fact in topology that there are ED-spaces whose product is not extremally disconnected (for example, $\beta\N$ is an ED-space but $\beta\N\times\beta\N$ is not; see \cite[p.~97]{GJ60}). 
\end{proof}

Let $\cBA$ denote the category of complete boolean algebras with boolean morphisms. Dualizing Theorem~\ref{thm: ED lacks products}, we get: 

\begin{corollary} \label{c:lacking-binary-coproducts}
    $\cBA$ lacks some binary copowers, and hence is not equivalent to a prevariety.
\end{corollary}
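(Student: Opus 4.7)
The plan is to invoke the classical Stone-type duality between $\cBA$ and $\ED$ and transport the negative result of \cref{thm: ED lacks products} across it. Recall that Stone duality restricts to a dual equivalence between $\cBA$ (complete boolean algebras with all boolean morphisms) and $\ED$ (extremally disconnected compact Hausdorff spaces with all continuous maps): a complete boolean algebra $B$ corresponds to its Stone space $\Uf(B)$, which is ED precisely because $B$ is complete, and an arbitrary boolean morphism $f \colon B \to B'$ dualizes to a continuous map $\Uf(B') \to \Uf(B)$ with no extra completeness requirement on $f$.

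Under this contravariant equivalence, colimits in $\cBA$ correspond to limits in $\ED$; in particular, binary copowers in $\cBA$ correspond to binary powers (that is, self-products) in $\ED$. Hence if $\ED$ were to lack the binary power of some object $X$, then $\cBA$ would lack the binary copower of the dually corresponding complete boolean algebra. Applying \cref{thm: ED lacks products} then immediately yields that $\cBA$ lacks some binary copowers.

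For the second clause, as recalled in the introduction, every prevariety viewed as a category is cocomplete. Consequently, a category that fails to admit some copower cannot be equivalent to a prevariety, and we conclude that $\cBA$ is not equivalent to a prevariety.

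The argument is essentially a one-line duality transport, so the only step that requires any care is pinning down the fact that the Stone duality for boolean algebras restricts to the claimed duality $\cBA \simeq \ED^{\op}$ with \emph{arbitrary} boolean morphisms on one side and \emph{arbitrary} continuous maps on the other; this is standard (see, e.g., \cite[p.~97]{GJ60} together with the characterization of ED-spaces as Stone duals of complete boolean algebras), so no real obstacle arises.
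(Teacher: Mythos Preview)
Your proof is correct and follows essentially the same approach as the paper: both argue that Stone duality restricts to a dual equivalence $\cBA \simeq \ED^{\op}$ (because a boolean algebra is complete iff its Stone space is extremally disconnected), transport \cref{thm: ED lacks products} across this duality, and conclude non-cocompleteness and hence non-equivalence to a prevariety. One minor quibble: the reference \cite[p.~97]{GJ60} is what the paper uses for the fact that $\beta\N\times\beta\N$ is not ED, not for the Stone duality restriction to complete boolean algebras; the latter is cited from \cite[Thm.~39]{GH09}.
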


\begin{proof}
    Since a boolean algebra is complete if and only if its Stone space is extremally disconnected (see, e.g., \cite[Thm.~39]{GH09}), restricting Stone duality to complete boolean algebras yields that $\cBA$ is dually equivalent to $\ED$. 
    Therefore, since $\ED$ lacks some binary powers, $\cBA$ lacks some binary copowers. Thus, $\cBA$ is not cocomplete, and hence is not equivalent to a prevariety. 
\end{proof} 

As an immediate consequence of Corollary~\ref{c:lacking-binary-coproducts}, we obtain:

\begin{corollary} \label{cor:no-bin-prod-if-surjects}
    A category $\C$ lacks some binary copowers provided there is a colimit-preserving essentially surjective functor $\mathcal U \colon \C \to \cBA$.
\end{corollary}

Let $\MTst$ be the category of MT-algebras and stable morphisms. We have the following version of Theorem~\ref{thm: MT lack copowers} for $\MTst$: 

\begin{corollary}\label{Cor: MTst not cocomplete}
    $\MTst$ lacks some binary copowers, and hence is not equivalent to a prevariety.
\end{corollary}

\begin{proof} 
    The same argument as in the proof of Theorem~\ref{t:adjoints} shows that the forgetful functor $\mathcal U\colon\MTst\to\cBA$ is both a left and right adjoint. The former gives that $\mathcal U$ is colimit-preserving and it is clearly essentially surjective. Thus, $\MTst$ lacks some binary copowers by Corollary~\ref{cor:no-bin-prod-if-surjects}.
\end{proof}

We finish this section by considering the following analogs of the categories in Definition \ref{def: categories of BAOs}. 
Let $\cBAO$ denote the category whose objects are complete BAOs and whose morphisms are stable morphisms, and define $\cT$, $\cKfour$, $\cB$, $\cTB$, and $\cMA$ similarly (observe that ${\bf cCA_{st}} = \MTst$). The same proof as above yields the following version of Theorem~\ref{thm: lack of copowers}:

\begin{corollary}
    Each of the categories $\cBAO$, $\cT$, $\cKfour$, $\cB$, $\cTB$, and $\cMA$ lacks some binary copowers. Consequently, none of them is equivalent to a prevariety.
\end{corollary}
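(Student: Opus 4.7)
The plan is to apply \cref{l:no-bin-prod-if-surjects} to the forgetful functor $\mathcal{U} \colon \C \to \cBA$ for each of the listed categories $\C$. Thus the two properties to establish are that $\mathcal{U}$ preserves colimits and is essentially surjective. I would obtain both simultaneously by exhibiting an explicit right adjoint to $\mathcal{U}$, mimicking the construction of \cref{t:adjoints} but in the ``morphisms need not be complete'' setting.

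For the categories $\cBAO$, $\cKfour$, and $\cB$, define $\mathcal{R} \colon \cBA \to \C$ on objects by $\mathcal{R}(A) = (A, \Diamond_0)$ and as the identity on morphisms. As observed before \cref{t:adjoints}, $(A,\Diamond_0)$ is both a $\bf K4$-algebra and a $\bf B$-algebra, so $\mathcal{R}$ is well defined. The adjunction identity $\hom_\C(B, \mathcal{R}(A)) = \hom_{\cBA}(\mathcal{U}(B), A)$ then holds: the inclusion $\subseteq$ is immediate, and for $\supseteq$ stability is trivial, since for any boolean morphism $f \colon \mathcal{U}(B) \to A$ one has $\Diamond_0 f(b) = 0 \leq f(\Diamond b)$. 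For the remaining categories $\cT$, $\cTB$, $\MTst$, and $\cMA$, the operator $\Diamond_0$ no longer lands in the correct subcategory, so instead I would set $\mathcal{R}'(A) = (A, \Diamond_i)$ with $\Diamond_i$ the identity, which is a complete monadic algebra and hence lies in each of these categories. Here stability for a boolean morphism $f \colon \mathcal{U}(B) \to A$ follows from the $\bf T$-algebra axiom satisfied by $B$: for every $b \in B$, $\Diamond_i f(b) = f(b) \leq f(\Diamond b)$, where the inequality uses $b \leq \Diamond b$.

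Once $\mathcal{U}$ is shown to be a left adjoint, it preserves colimits; essential surjectivity is also immediate since $\mathcal{U} \circ \mathcal{R} = \id_{\cBA}$ (respectively $\mathcal{U} \circ \mathcal{R}' = \id_{\cBA}$). Invoking \cref{l:no-bin-prod-if-surjects} then shows that each of the listed categories lacks some binary copowers, so none is cocomplete, and therefore none is equivalent to a prevariety.

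No substantive obstacle is expected: the argument is essentially a transcription of the adjunction portion of \cref{t:adjoints} to the present setting, and the only bookkeeping issue is matching each category with the appropriate operator ($\Diamond_0$ or $\Diamond_i$) so that the right adjoint genuinely takes values in the intended full subcategory.
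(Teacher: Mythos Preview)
Your proposal is correct and follows essentially the same approach as the paper: you exhibit a right adjoint to the forgetful functor $\mathcal{U}\colon \C \to \cBA$ using precisely the operators $\Diamond_0$ and $\Diamond_i$ from \cref{t:adjoints}, conclude that $\mathcal{U}$ preserves colimits and is essentially surjective, and then invoke \cref{l:no-bin-prod-if-surjects}. The only cosmetic difference is that the paper also mentions the left adjoint (via $\Diamond_s$), but as you implicitly recognize, only the right adjoint is needed for the argument.
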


\section{Closure algebras with stable morphisms}\label{sec: 3.2}

In Section~\ref{sec: CBAOs} we showed that the category $\MT$ of MT-algebras and MT-morphisms lacks some countable copowers. In Section~\ref{sec: cBA} we relaxed the notion of an MT-morphism by dropping the completeness assumption and showed that the category $\MTst$ of MT-algebras and stable morphisms lacks some binary copowers. In this section, we further drop the requirement of completeness on the object side and show that the category $\CA$ of closure algebras and stable morphisms lacks some coequalizers. This we do by showing that the corresponding category of Stone spaces equipped with continuous quasi-orders lacks some equalizers and then employing J\'onsson-Tarski duality. The same results are proved for several other categories of BAOs with stable morphisms. Consequently, none of these is equivalent to a prevariety.

Let $X$ and $X'$ be compact Hausdorff spaces, $R$ a closed relation on $X$ (i.e., a closed subset of $X \times X$), and $R'$ a closed relation on $X'$. Following \cite[Def.~3.2]{BBI16}, we call a continuous map $f\colon X\to X'$ \emph{stable} provided 
\[
    x \mathrel{R} y \Longrightarrow f(x) \mathrel{R'} f(y) \quad \forall x,y\in X.
\]

\begin{definition}
    Let $\KHausR$ denote the category whose objects are pairs $(X,R)$ where $X$ is a compact Hausdorff space and $R$ is a closed relation on $X$, and whose morphisms are stable maps.
\end{definition}

It is clear that isomorphisms in $\KHausR$ are continuous bijections that preserve and reflect the binary relations.

\begin{proposition}\label{Prop: KHausR complete}
    The category $\KHausR$ is complete.
\end{proposition}
    
\begin{proof}
    The product of $\{(X_i,R_i) : i \in I\}$ in $\KHausR$ is $(\prod X_i, \prod R_i)$, where $\prod X_i$ is the product of $\{X_i : i \in I\}$ in $\KHaus$ and $\prod R_i$ is the product relation given by 
    \[
    f \left(\mathrel{\prod R_i}\right) g \iff f(i) \mathrel{R_i} g(i) \ \ \forall i \in I.
    \]
    The equalizer of $f, g \colon (X,R) \to (X',R')$ in $\KHausR$ is their equalizer 
    \[
    \{ x \in X : f(x) = g(x)\}
    \]
    in $\KHaus$ equipped with the restriction of $R$. Consequently, $\KHausR$ is complete.
\end{proof}

Let $\tworef = \{ 0,1 \}$ denote the two-element chain with its usual (reflexive) order, which we view as an object of $\KHausR$ by equipping it with the discrete topology. 

\begin{theorem} 
    \label{thm: preservation limits: reflexive}
        If $\E$ is a full subcategory of $\KHausR$ containing $\tworef$ and such that for all $(X, R) \in \E$ the relation $R$ is reflexive, then the inclusion $\E \hookrightarrow \KHausR$ preserves limits.
\end{theorem}

\begin{proof}
    Let $\KHausT$ denote the full subcategory of $\KHausR$ consisting of those $(X,R)$ where $R$ is reflexive.\footnote{The subscript $\mathbf{T}$ is motivated by Theorem~\ref{Thm: JT correspondence}.}
    Since the inclusion $\KHausT \hookrightarrow \KHausR$ preserves limits (by the proof of Proposition~\ref{Prop: KHausR complete}), it is enough to prove that the inclusion $\E \hookrightarrow \KHausT$ preserves limits.
    This can be done by verifying the condition of Corollary~\ref{cor: general-conservative} by showing that
    \[
    \hom_{\KHausT}(\tworef,-) \colon \KHausT \to \Set
    \]
    is conservative. Let $f \colon (X,R) \to (X',R')$ be a morphism in $\KHausT$ whose image
    \[
    f \circ - \colon \hom_{\KHausT}(\tworef,X) \to \hom_{\KHausT}(\tworef, X')
    \]
    is a bijection. 
    To prove that $f$ is an isomorphism in $\KHausT$ it is sufficient to show that it is a bijection that reflects the relation. We first show that $f$ is a bijection. Let $y \in X'$, and consider the map $c_y \colon  \tworef \to X'$ with constant value $y$, which is a morphism in $\KHausT$. Since $f \circ -$ is onto, there is a morphism $g \colon \tworef \to X$ in $\KHausT$ such that $f \circ g = c_y$. Therefore, $y$ is in the image of $f$, and so $f$ is onto. Suppose that $x_1,x_2 \in X$ are such that $f(x_1)=f(x_2)$ and consider the maps $c_{x_1}, c_{x_2} \colon  \tworef \to X$ with constant values $x_1,x_2$. Then $f \circ c_{x_1} = f \circ c_{x_2}$ because $f(x_1)=f(x_2)$. Since $f \circ -$ is one-to-one, $c_{x_1}=c_{x_2}$, and so $x_1=x_2$. Thus, $f$ is one-to-one, and hence $f$ is a bijection.

    We now show that $f$ reflects the relation. Let $x_1,x_2 \in X$ be such that $f(x_1) \mathrel{R'} f(x_2)$. Define $h \colon \tworef \to X'$ by $h(0) = f(x_1)$ and $h(1) = f(x_2)$. Because $\tworef$ is discrete, $h$ is continuous. Since ${f(x_1) \mathrel{R'} f(x_2)}$, we have that $h$ is a morphism in $\KHausT$. Because $f \circ -$ is onto, there is a morphism $g \colon \tworef \to X$ such that $f \circ g = h$. So, $fg(0)=h(0)=f(x_1)$ and $fg(1)=h(1)=f(x_2)$. Since $f$ is one-to-one, we conclude that $g(0)=x_1$ and $g(1)=x_2$. Thus, $x_1 \mathrel{R} x_2$ because $0 < 1$ and $g$ is a morphism in $\KHausT$.
\end{proof}

We recall that a closed relation $R$ on a compact Hausdorff space $X$ is \emph{continuous} provided 
\[
R^{-1}[U] \coloneqq \{ x \in X : x \mathrel{R} u \mbox{ for some } u \in U\}
\]
is open for each open $U \subseteq X$. 

\begin{definition}\label{def: Cat with cont R}
    Let $\StoneCSfour$ denote the full subcategory of $\KHausR$ whose objects are those $(X,R)$ where $X$ is a Stone space and $R$ is a continuous quasi-order \textup{(}reflexive and transitive\textup{)}.
\end{definition}

Applying Theorem~\ref{thm: preservation limits: reflexive} yields:

\begin{corollary}\label{cor: inclusion StoneCfour preserves limits}
    The inclusion $\StoneCSfour \hookrightarrow \KHausR$ preserves limits.
\end{corollary}

We prove that $\StoneCSfour$ is not complete.

\begin{theorem} \label{thm: continuous StoneC}
    $\StoneCSfour$ lacks some equalizers.
\end{theorem}

\begin{proof}
    Consider the space $X$ shown in Figure~\ref{fig1}, where $x_\infty$ is the limit of the sequence $(x_n)$ of isolated points and $y_\infty$ is the limit of the sequence $(y_n)$ of isolated points. Therefore, $X$ is a Stone space.
    The relation $R$ is defined on $X$ as shown in Figure~\ref{fig1}, where circles indicate that $R$ is reflexive. It is not difficult to see that $R$ is a continuous relation.
    Thus, $(X,R)$ is an object of $\StoneCSfour$.
    \begin{figure}[h]
        \centering
       \begin{tikzpicture}
        	\begin{pgfonlayer}{nodelayer}
        		\node [style=white dot] (0) at (-3, 0.5) {};
        		\node [style=white dot] (1) at (-3, 2) {};
        		\node [style=white dot] (2) at (-2, 2) {};
        		\node [style=white dot] (3) at (-2, 0.5) {};
        		\node [style=white dot] (4) at (-1, 0.5) {};
        		\node [style=white dot] (5) at (-1, 2) {};
        		\node [style=white dot] (6) at (2, 2) {};
        		\node [style=white dot] (7) at (2, 0.5) {};
        		\node [style=white dot] (8) at (-3, 0.5) {};
        		\node [style=small black dot] (9) at (0, 2) {};
        		\node [style=small black dot] (10) at (0.5, 2) {};
        		\node [style=small black dot] (11) at (1, 2) {};
        		\node [style=small black dot] (12) at (0, 0.5) {};
        		\node [style=small black dot] (13) at (0.5, 0.5) {};
        		\node [style=small black dot] (14) at (1, 0.5) {};
        		\node [style=none] (19) at (-3, -0.25) {$x_0$};
        		\node [style=none] (20) at (-2, -0.25) {$x_1$};
        		\node [style=none] (21) at (-1, -0.25) {$x_2$};
        		\node [style=none] (22) at (2, -0.25) {$x_\infty$};
        		\node [style=none] (23) at (-3, 2.75) {$y_0$};
        		\node [style=none] (24) at (-2, 2.75) {$y_1$};
        		\node [style=none] (25) at (-1, 2.75) {$y_2$};
        		\node [style=none] (26) at (2, 2.75) {$y_\infty$};
        	\end{pgfonlayer}
        	\begin{pgfonlayer}{edgelayer}
        		\draw[style=to] (8) to (1);
        		\draw[style=to] (3) to (2);
        		\draw[style=to] (4) to (5);
        		\draw[style=to] (7) to (6);
        	\end{pgfonlayer}
        \end{tikzpicture}
        \caption{The space $(X,R)$.}
        \label{fig1}
    \end{figure}
        
    Let $X'$ be the space shown in Figure~\ref{fig2}, where $u_\infty$ is the limit of the sequence $(u_n)$ of isolated points and $v_\infty$ is the limit of each of the sequences $(v_n)$ and $(w_n)$ of isolated points. Therefore, $X'$ is a Stone space. The relation $R'$ is defined on $X'$ as shown in Figure~\ref{fig2}, where circles indicate that $R'$ is reflexive. It is not difficult to see that $R'$ is a continuous relation.
    Thus, $(X',R')$ is an object of $\StoneCSfour$.
    \begin{figure}[!h]
        \centering
        \begin{tikzpicture}
        	\begin{pgfonlayer}{nodelayer}
        		\node [style=white dot] (1) at (-4.5, 2) {};
        		\node [style=white dot] (2) at (-2.75, 2) {};
        		\node [style=white dot] (3) at (-3.25, 0.5) {};
        		\node [style=white dot] (4) at (-1.5, 0.5) {};
        		\node [style=white dot] (5) at (-1, 2) {};
        		\node [style=white dot] (6) at (2, 2) {};
        		\node [style=white dot] (7) at (2, 0.5) {};
        		\node [style=white dot] (8) at (-5, 0.5) {};
        		\node [style=small black dot] (9) at (0, 2) {};
        		\node [style=small black dot] (10) at (0.5, 2) {};
        		\node [style=small black dot] (11) at (1, 2) {};
        		\node [style=small black dot] (12) at (0, 0.5) {};
        		\node [style=small black dot] (13) at (0.5, 0.5) {};
        		\node [style=small black dot] (14) at (1, 0.5) {};
        		\node [style=none] (19) at (-5, -0.25) {$u_0$};
        		\node [style=none] (20) at (-3.25, -0.25) {$u_1$};
        		\node [style=none] (21) at (-1.5, -0.25) {$u_2$};
        		\node [style=none] (22) at (2, -0.25) {$u_\infty$};
        		\node [style=none] (23) at (-5.75, 2.5) {$v_0$};
        		\node [style=none] (24) at (-3.75, 2.5) {$v_1$};
        		\node [style=none] (25) at (-2, 2.5) {$v_2$};
        		\node [style=none] (26) at (2, 2.5) {$v_\infty$};
        		\node [style=white dot] (27) at (-5.5, 2) {};
        		\node [style=white dot] (28) at (-3.75, 2) {};
        		\node [style=white dot] (30) at (-2, 2) {};
        		\node [style=none] (31) at (-4.5, 2.5) {$w_0$};
        		\node [style=none] (32) at (-2.75, 2.5) {$w_1$};
        		\node [style=none] (33) at (-1, 2.5) {$w_2$};
        	\end{pgfonlayer}
        	\begin{pgfonlayer}{edgelayer}
        		\draw[style=to] (8) to (1);
        		\draw[style=to] (3) to (2);
        		\draw[style=to] (4) to (5);
        		\draw[style=to] (7) to (6);
        		\draw[style=to] (8) to (27);
        		\draw[style=to] (3) to (28);
        		\draw[style=to] (4) to (30);
        	\end{pgfonlayer}
        \end{tikzpicture}
        \caption{The space $(X',R')$.}
        \label{fig2}
    \end{figure}

    Define $f,g\colon X \to X'$ as follows:
    \begin{align*}
        f(x_\infty) ={} & u_\infty = g(x_\infty),\\
        f(x_n) ={} & u_n = g(x_n),\\
        f(y_\infty) ={} & v_\infty = g(y_\infty),\\
        f(y_n) ={} & v_n,\\
        & w_n = g(y_n). 
    \end{align*}
    In other words, $f$ and $g$ only differ in that $f(y_n)=v_n$ while $g(y_n) = w_n$.
    It is straightforward to see that $f$ and $g$ are $\KHausR$-morphisms, and that the equalizer of $f$ and $g$ in $\KHausR$ is $Y = \{ x_n : n \in \mathbb N\} \cup \{ x_\infty,y_\infty \}$ (see the proof of Proposition~\ref{Prop: KHausR complete}).
    But the restriction of $R$ to $Y$ is no longer continuous because $\{ y_\infty \}$ is open in $Y$, but $R^{-1}[\{y_\infty\}] = \{ y_\infty,x_\infty \}$ is not.
    Thus, by Corollary~\ref{cor: inclusion StoneCfour preserves limits},
    the equalizer of $f$ and $g$ does not exist in $\StoneCSfour$.
\end{proof}

We now apply J\'onsson-Tarski duality for BAOs \cite{JT51} (see also \cite{CZ97} or \cite{BRV01}) to closure algebras, by which a BAO $(B,\Diamond)$ is a closure algebra if and only if in its J\'onsson-Tarski dual $(X,R)$ the relation $R$ is reflexive and transitive. This together with the fact that stable morphisms between BAOs correspond to stable maps between their dual spaces \cite[Lem.~3.3]{BBI16} yields that $\CA$ is dually equivalent to $\StoneCSfour$.
Putting this together with Theorem~\ref{thm: continuous StoneC} immediately gives:

\begin{corollary}\label{Cor: CA not cocomplete}
    $\CA$ lacks some coequalizers, and hence it is not equivalent to a prevariety.
\end{corollary}

The table below summarizes the main results of Sections~\ref{sec: CBAOs} to \ref{sec: 3.2}.

\begin{table}[h]
    \centering
    \begin{tabular}{|l|l|l|l|}
        \hline
        \textbf{Category} & \textbf{Objects} & \textbf{Morphisms} & \textbf{Location} \\
        \hline
        $\MT$ & MT-algebras & MT-morphisms & Thm.~\ref{thm: MT lack copowers}\\
        $\MTst$ & MT-algebras & Stable morphisms & Cor.~\ref{Cor: MTst not cocomplete} \\
        $\CA$ & Closure algebras & Stable morphisms & Cor.~\ref{Cor: CA not cocomplete} \\
        \hline
    \end{tabular}
    \vspace{3mm}
    \caption{Categories of closure algebras that are not cocomplete.}\label{table new1}
\end{table}

As in the previous two sections, Corollary~\ref{Cor: CA not cocomplete} extends to various categories of BAOs with stable morphisms. 
To do so, as we did for $\CA$, we use duality.
For this, we recall the following well-known result (see \cite{JT51}, \cite{CZ97}, or \cite{BRV01}).

\begin{theorem}\label{Thm: JT correspondence}
    Let $(B,\Diamond)$ be a BAO and $(X,R)$ its J\'onsson-Tarski dual. 
    \begin{enumerate}
        \item $(B,\Diamond)$ is a $\mathbf{T}$-algebra if and only if $R$ is reflexive;
        \item $(B,\Diamond)$ is a $\mathbf{K4}$-algebra if and only if $R$ is transitive;
        \item $(B,\Diamond)$ is a $\mathbf{B}$-algebra if and only if $R$ is symmetric;
        \item $(B,\Diamond)$ is a $\mathbf{TB}$-algebra if and only if $R$ is reflexive and symmetric;
        \item $(B,\Diamond)$ is an $\mathbf{S5}$-algebra if and only if $R$ is an equivalence relation.
    \end{enumerate}
\end{theorem}

We thus have the following counterpart of Definition~\ref{def: categories of BAOs}.  

\begin{definition}
    Let $\StoneC$ denote the full subcategory of $\KHausR$ whose objects are those $(X,R)$ where $X$ is a Stone space and $R$ is a continuous relation, and define 
    $\StoneCT$, $\StoneCKfour$, $\StoneCB$, $\StoneCTB$, and $\StoneCSfive$ as the full subcategories of $\StoneC$ with the additional condition on $R$ described in Theorem~\ref{Thm: JT correspondence}.
\end{definition}

We have the following inclusion relationship between the categories defined above and $\StoneCSfour$: 

\[\begin{tikzcd}[sep=small, column sep=-1.3em]
	&& \StoneC \\[6ex]
	\\
	\StoneCKfour && \StoneCT && \StoneCB \\
	\\
	& \StoneCSfour && \StoneCTB \\
	\\
	&& \StoneCSfive
	\arrow[hook, from=3-1, to=1-3]
	\arrow[hook, from=3-3, to=1-3]
	\arrow[hook', from=3-5, to=1-3]
	\arrow[hook', from=5-2, to=3-1]
	\arrow[hook, from=5-2, to=3-3]
	\arrow[hook', from=5-4, to=3-3]
	\arrow[hook, from=5-4, to=3-5]
	\arrow[hook', from=7-3, to=5-2]
	\arrow[hook, from=7-3, to=5-4]
\end{tikzcd}\]

Theorem~\ref{Thm: JT correspondence} together with the fact that stable morphisms between BAOs correspond to stable maps between their dual spaces yields the following version of J\'onsson-Tarski duality and its restrictions:

\begin{theorem}\ \label{thm: JT for BAOs}
\hfill    
\begin{enumerate}
        \item $\BAO$ is dually equivalent to $\StoneC$.
        \item $\T$ is dually equivalent to $\StoneCT$.
        \item $\Kfour$ is dually equivalent to $\StoneCKfour$.
        \item $\B$ is dually equivalent to $\StoneCB$.
        \item $\TB$ is dually equivalent to $\StoneCTB$.
        \item $\MA$ is dually equivalent to $\StoneCSfive$.
    \end{enumerate}
\end{theorem}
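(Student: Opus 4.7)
The plan is to lift the classical Jónsson-Tarski duality \cite{JT51} between BAOs and Stone spaces with a suitable binary relation to the setting of stable morphisms, and then restrict this equivalence to the full subcategories on both sides cut out by each modal axiom.

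First I would recall the contravariant functors implementing the duality. To a BAO $(B,\Diamond)$ one associates the Stone space $X$ of ultrafilters of $B$ equipped with the relation
\[
x \R y \iff (\forall a \in B)(a \in y \Longrightarrow \Diamond a \in x).
\]
Standard computations show that $R$ is closed in $X \times X$ and that $R^{-1}[\hat a] = \widehat{\Diamond a}$ is clopen for every basic clopen $\hat a = \{x \in X : a \in x\}$; since such $\hat a$ form a base of $X$, the relation $R$ is continuous in the paper's sense, so $(X,R) \in \StoneC$. In the reverse direction, to $(X,R) \in \StoneC$ one associates the BAO $(\Clop(X),\Diamond_R)$ with $\Diamond_R U \coloneqq R^{-1}[U]$; continuity of $R$ guarantees $\Diamond_R U$ is clopen whenever $U$ is. On morphisms, both functors act by the usual Stone-dual preimage maps, and the unit and counit are inherited from Stone duality for the underlying categories of $\BA$ and $\Stone$.

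For item~(1), the only point beyond standard Stone duality is that a boolean morphism $f \colon A \to B$ is stable (i.e.\ $\Diamond f(a) \le f(\Diamond a)$) if and only if its dual map $f_* \colon (B_*,\R^B) \to (A_*,\R^A)$ is stable in the topological sense used earlier in the paper; this is precisely \cite[Lem.~3.3]{BBI16}, which I would cite directly. For items~(2)--(7) it then remains to check that each defining axiom matches the corresponding relational condition: $a \le \Diamond a$ with reflexivity of $R$, $\Diamond\Diamond a \le \Diamond a$ with transitivity of $R$, and $\Diamond\neg\Diamond a \le \neg a$ (equivalently $a \le \Box\Diamond a$) with symmetry of $R$; the combined cases $\TB$, $\CA$, and $\MA$ then follow by intersecting. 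Modulo the evaluation isomorphism $B \cong \Clop(X)$, each of these correspondences reduces to a short ultrafilter-separation argument on sets of the form $\hat a$.

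The only place where real care is required is the $\bf B$-axiom: symmetry of $R$ trivially yields $a \le \Box\Diamond a$, whereas the converse requires, from a pair with $x \R y$ but $y \not\R x$, the production of a witness $a \in B$ with $a \in x$ yet $\Diamond a \notin z$ for every $z \in R[y]$. This is done by noting that $R[y]$ is closed in the Stone space $X$ (since $R$ is closed) and applying Stone separation to find a clopen neighborhood of $x$ disjoint from $R[y]$, which furnishes the required $a$. Apart from this case, all remaining work is bookkeeping: one observes that the full-subcategory inclusions on both sides match under the functors, and concludes the seven dualities (1)--(7).
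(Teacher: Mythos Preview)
Your proposal is correct and follows essentially the same approach as the paper, which simply invokes J\'onsson--Tarski duality together with \cite[Lem.~3.3]{BBI16} for the stable-morphism correspondence and the standard frame correspondences for items~(2)--(7); you merely spell out more of the routine details. One small slip: in the $\bf B$-axiom argument the witness you need satisfies $a \notin z$ for every $z \in R[y]$ (equivalently, $\Diamond a \notin y$), not ``$\Diamond a \notin z$ for every $z \in R[y]$'', but your subsequent sentence producing a clopen neighborhood of $x$ disjoint from $R[y]$ shows you have the right argument.
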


We now show that, like $\StoneCSfour$, all the categories in the diagram above also lack some equalizers. We follow the convention in modal logic (see \cite[p.~66]{CZ97}) and denote reflexive points of a binary relation by 
$\tikz[baseline=-0.5ex]{
\node[circle,draw,inner sep=2pt,fill=white] (a) at (0,0) {};
}$
and irreflexive points by
$\tikz[baseline=-0.5ex]{
\node[circle,draw,inner sep=2pt,fill=black] (a) at (0,0) {};
}$. Let $\oneirr$ denote the irreflexive singleton and $\twoirr$ the irreflexive 2-element chain
$\tikz[baseline=-0.5ex]{
  \node[circle,draw,inner sep=2pt,fill=black] (a) at (0,0) {};
  \node[circle,draw,inner sep=2pt,fill=black] (b) at (0.7,0) {};
  \draw[->, shorten <=0.5pt, shorten >=0.5pt] (a) -- (b);
}
$. 
We denote the bottom of this chain by $0$ and the top by $1$.
Finally, let 
$\twoirrsym$ denote the irreflexive doubleton
$\tikz[baseline=-0.5ex]{
  \node[circle,draw,inner sep=2pt,fill=black] (a) at (0,0) {};
  \node[circle,draw,inner sep=2pt,fill=black] (b) at (0.7,0) {};
  \draw[<->, shorten <=0.5pt, shorten >=0.5pt] (a) -- (b);
}$ and 
$\tworefsym$ the reflexive doubleton
$\tikz[baseline=-0.5ex]{
  \node[circle,draw,inner sep=2pt,fill=white] (a) at (0,0) {};
  \node[circle,draw,inner sep=2pt,fill=white] (b) at (0.7,0) {};
  \draw[<->, shorten <=0.5pt, shorten >=0.5pt] (a) -- (b);
}$. In each of these cases, the two points (which we also denote by $0$ and $1$) are both related to each other.
We view $\oneirr$, $\twoirr$, $\twoirrsym$, and $\tworefsym$ as objects of $\KHausR$ by equipping them with the discrete topology.
The next theorem is an analog of Theorem~\ref{thm: preservation limits: reflexive}.

\begin{theorem}\label{thm: preservation limits}
    Let $\E$ be a full subcategory of $\KHausR$ satisfying one of the following conditions.
    \begin{enumerate}
    
        \item\label{thm: preservation limits: irreflexive}
        $\E$ contains $\oneirr$ and $\twoirr$.
        
        \item\label{thm: preservation limits: irreflexive symmetric}
        For all $(X,R) \in \E$ the relation $R$ is symmetric, and $\E$ contains $\oneirr$ and $\twoirrsym$.
        
        \item\label{thm: preservation limits: reflexive symmetric} 
        For all $(X,R) \in \E$ the relation $R$ is reflexive and symmetric, and $\E$ contains $\tworefsym$.
        
    \end{enumerate}
    Then the inclusion functor $\E \hookrightarrow \KHausR$ preserves limits.
\end{theorem}

\begin{proof}
    \eqref{thm: preservation limits: irreflexive}.
    Let $\C = \KHausR$ and $\pazocal{H} = \{ \oneirr,\twoirr \}$. 
    We prove that the condition of Theorem~\ref{thm: general} is satisfied by showing that, for any morphism $f \colon (X,R) \to (X',R')$ in $\KHausR$, if the functions
    \[
    f \circ - \colon \hom_{\KHausR}(\oneirr,X)  \to \hom_{\KHausR}(\oneirr,X')
    \]
    and
    \[
    f \circ - \colon \hom_{\KHausR}(\twoirr,X)  \to \hom_{\KHausR}(\twoirr,X')
    \]
    are bijections, then $f$ is an isomorphism.
    To prove that $f$ is an isomorphism in $\KHausR$ it is sufficient to show that it is a bijection that reflects the relation, which we do by adjusting the proof of Theorem~\ref{thm: preservation limits: reflexive} accordingly. 
    
    We first show that $f$ is a bijection. Let $y \in X'$, and consider the map $c_y \colon  \oneirr \to X'$ with constant value $y$, which is a morphism in $\KHausR$. Since $f \circ -$ is onto, there is a morphism $g \colon \oneirr \to X$ in $\KHausR$ such that $f \circ g = c_y$. Therefore, $y$ is in the image of $f$, and so $f$ is onto. Suppose that $x_1,x_2 \in X$ are such that $f(x_1)=f(x_2)$ and consider the maps $c_{x_1}, c_{x_2} \colon  \oneirr \to X$ with constant values $x_1,x_2$. Then $f \circ c_{x_1} = f \circ c_{x_2}$ because $f(x_1)=f(x_2)$. Since $f \circ -$ is one-to-one, $c_{x_1}=c_{x_2}$, and so $x_1=x_2$. Thus, $f$ is one-to-one, and hence $f$ is a bijection.

    We now show that $f$ reflects the relation. Let $x_1,x_2 \in X$ be such that $f(x_1) \mathrel{R'} f(x_2)$. Define $h \colon \twoirr \to X'$ by $h(0) = f(x_1)$ and $h(1) = f(x_2)$. Since $\twoirr$ is discrete, $h$ is continuous. Because ${f(x_1) \mathrel{R'} f(x_2)}$, we have that $h$ is a morphism in $\KHausR$. Since $f \circ -$ is onto, there is a morphism $g \colon \twoirr \to X$ such that $f \circ g = h$. Therefore,  $fg(0)=h(0)=f(x_1)$ and $fg(1)=h(1)=f(x_2)$. Since $f$ is one-to-one, we conclude that $g(0)=x_1$ and $g(1)=x_2$. Thus, $x_1 \mathrel{R} x_2$ because $0 < 1$ and $g$ is a morphism in $\KHausR$.
    
    \eqref{thm: preservation limits: irreflexive symmetric}.
    Let $\KHausB$ denote the full subcategory of $\KHausR$ consisting of those $(X,R)$ where $R$ is symmetric.
    It follows from the proof of Proposition~\ref{Prop: KHausR complete} that the inclusion $\KHausB \hookrightarrow \KHausR$ preserves limits. Therefore, it is enough to prove that the inclusion $\E \hookrightarrow \KHausB$ preserves limits, which can be done by 
    replacing $\twoirr$ with $\twoirrsym$ in the proof of~\eqref{thm: preservation limits: irreflexive}.
    
    \eqref{thm: preservation limits: reflexive symmetric}.
    Let $\KHausTB$ denote the full subcategory of $\KHausR$ consisting of those $(X,R)$ where $R$ is reflexive and symmetric.
    By the proof of Proposition~\ref{Prop: KHausR complete}, the inclusion $\KHausTB \allowbreak \hookrightarrow \KHausR$ preserves limits. Thus, it is enough to prove that the inclusion $\E \hookrightarrow \KHausTB$ preserves limits, which can be done by replacing $\tworef$ by $\tworefsym$ in the proof of Theorem~\ref{thm: preservation limits: reflexive}.
\end{proof}

\begin{remark}
    Comparing Theorem~\ref{thm: preservation limits} to Theorem~\ref{thm: preservation limits: reflexive}, in the reflexive case we don't need $\E$ to contain the reflexive singleton because every constant map from $\tworef$ to an object $(X,R)$ with $R$ reflexive is a morphism of $\KHausR$.
    This does not hold in general if $R$ is not reflexive: indeed, if $y$ is an irreflexive point, then the constant map $c_y \colon \twoirr \to X$ with constant value $y$ is not a morphism.
\end{remark}

\begin{theorem}\label{thm: continuous}
    Each of the categories 
    $\StoneC$, $\StoneCT$, $\StoneCKfour$, $\StoneCB$, \allowbreak
    $\StoneCTB$, and $\StoneCSfive$
    lacks some equalizers.
\end{theorem}

\begin{proof}
    The same proof as for $\StoneCSfour$ gives that $\StoneCT$ lacks some equalizers. Let $\E$ denote one of the remaining categories. We adjust the proof of Theorem~\ref{thm: continuous StoneC} by taking the equivalence relations generated by the partial orders of the spaces $X$ and $X'$ shown in  Figures~\ref{fig1} and~\ref{fig2}: 
     
    \begin{figure}[!h]
        \centering
        \begin{tikzpicture}
        	\begin{pgfonlayer}{nodelayer}
        		\node [style=white dot] (0) at (-3, 0.5) {};
        		\node [style=white dot] (1) at (-3, 2) {};
        		\node [style=white dot] (2) at (-2, 2) {};
        		\node [style=white dot] (3) at (-2, 0.5) {};
        		\node [style=white dot] (4) at (-1, 0.5) {};
        		\node [style=white dot] (5) at (-1, 2) {};
        		\node [style=white dot] (6) at (2, 2) {};
        		\node [style=white dot] (7) at (2, 0.5) {};
        		\node [style=white dot] (8) at (-3, 0.5) {};
        		\node [style=small black dot] (9) at (0, 2) {};
        		\node [style=small black dot] (10) at (0.5, 2) {};
        		\node [style=small black dot] (11) at (1, 2) {};
        		\node [style=small black dot] (12) at (0, 0.5) {};
        		\node [style=small black dot] (13) at (0.5, 0.5) {};
        		\node [style=small black dot] (14) at (1, 0.5) {};
        		\node [style=none] (19) at (-3, -0.25) {$x_0$};
        		\node [style=none] (20) at (-2, -0.25) {$x_1$};
        		\node [style=none] (21) at (-1, -0.25) {$x_2$};
        		\node [style=none] (22) at (2, -0.25) {$x_\infty$};
        		\node [style=none] (23) at (-3, 2.75) {$y_0$};
        		\node [style=none] (24) at (-2, 2.75) {$y_1$};
        		\node [style=none] (25) at (-1, 2.75) {$y_2$};
        		\node [style=none] (26) at (2, 2.75) {$y_\infty$};
        	\end{pgfonlayer}
        	\begin{pgfonlayer}{edgelayer}
        		\draw[style=doubleto] (8) to (1);
        		\draw[style=doubleto] (3) to (2);
        		\draw[style=doubleto] (4) to (5);
        		\draw[style=doubleto] (7) to (6);
        	\end{pgfonlayer}
        \end{tikzpicture}
        \caption{The space $(X,R)$.}
        \label{fig1a}
    \end{figure}

    \begin{figure}[!h]
        \centering
        \begin{tikzpicture}
        	\begin{pgfonlayer}{nodelayer}
        		\node [style=white dot] (1) at (-4.5, 2) {};
        		\node [style=white dot] (2) at (-2.75, 2) {};
        		\node [style=white dot] (3) at (-3.25, 0.5) {};
        		\node [style=white dot] (4) at (-1.5, 0.5) {};
        		\node [style=white dot] (5) at (-1, 2) {};
        		\node [style=white dot] (6) at (2, 2) {};
        		\node [style=white dot] (7) at (2, 0.5) {};
        		\node [style=white dot] (8) at (-5, 0.5) {};
        		\node [style=small black dot] (9) at (0, 2) {};
        		\node [style=small black dot] (10) at (0.5, 2) {};
        		\node [style=small black dot] (11) at (1, 2) {};
        		\node [style=small black dot] (12) at (0, 0.5) {};
        		\node [style=small black dot] (13) at (0.5, 0.5) {};
        		\node [style=small black dot] (14) at (1, 0.5) {};
        		\node [style=none] (19) at (-5, -0.25) {$u_0$};
        		\node [style=none] (20) at (-3.25, -0.25) {$u_1$};
        		\node [style=none] (21) at (-1.5, -0.25) {$u_2$};
        		\node [style=none] (22) at (2, -0.25) {$u_\infty$};
        		\node [style=none] (23) at (-5.75, 2.5) {$v_0$};
        		\node [style=none] (24) at (-3.75, 2.5) {$v_1$};
        		\node [style=none] (25) at (-2, 2.5) {$v_2$};
        		\node [style=none] (26) at (2, 2.5) {$v_\infty$};
        		\node [style=white dot] (27) at (-5.5, 2) {};
        		\node [style=white dot] (28) at (-3.75, 2) {};
        		\node [style=white dot] (30) at (-2, 2) {};
        		\node [style=none] (31) at (-4.5, 2.5) {$w_0$};
        		\node [style=none] (32) at (-2.75, 2.5) {$w_1$};
        		\node [style=none] (33) at (-1, 2.5) {$w_2$};
        	\end{pgfonlayer}
        	\begin{pgfonlayer}{edgelayer}
        		\draw[style=doubleto] (8) to (1);
                    \draw[style=doubleto] (27) to (1);
        		\draw[style=doubleto] (3) to (2);
        		\draw[style=doubleto] (4) to (5);
        		\draw[style=doubleto] (30) to (5);
        		\draw[style=doubleto] (7) to (6);
        		\draw[style=doubleto] (8) to (27);
        		\draw[style=doubleto] (3) to (28);
                    \draw[style=doubleto] (2) to (28);
        		\draw[style=doubleto] (4) to (30);
        	\end{pgfonlayer}
        \end{tikzpicture}
        \caption{The space $(X',R')$.}
        \label{fig2a}
    \end{figure}

    It is not difficult to see that $(X,R)$ and $(X',R')$ are objects of $\StoneCSfive$, and hence of $\E$.
    Now define $f,g\colon X \to X'$ as in the proof of Theorem~\ref{thm: continuous StoneC} and observe that the same proof yields that
    the relation on the equalizer of $f$ and $g$ in $\KHausR$ is no longer continuous.
    Thus, by Theorem~\ref{thm: preservation limits}, the equalizer of $f$ and $g$ does not exist in $\E$.
\end{proof}

Putting Theorems~\ref{thm: JT for BAOs} and~\ref{thm: continuous} together   immediately gives:

\begin{corollary}
    Each of $\BAO$, $\T$, $\Kfour$, $\B$, $\TB$, and $\MA$ lacks some coequalizers.
    Consequently, none of them is equivalent to a prevariety.
\end{corollary}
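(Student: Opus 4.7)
The plan is to combine the J\'onsson--Tarski style dualities collected in \cref{thm: JT for BAOs} with the failure of equalizers established in \cref{thm: continuous Stone}. Each of the seven categories in the statement is dually equivalent to the corresponding category of Stone spaces with a continuous relation, and a dual equivalence of categories carries equalizers on one side to coequalizers on the other. So the obstruction to equalizers in the spatial categories translates immediately into an obstruction to coequalizers on the algebraic side.

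Concretely, I would fix $\C$ to be one of $\BAOst$, $\T$, $\Kfour$, $\B$, $\TB$, $\CA$, $\MA$, and let $\mathbf{D}$ denote its dual category under \cref{thm: JT for BAOs} (so $\mathbf{D}$ is one of $\StoneC$, $\StoneCT$, $\StoneCKfour$, $\StoneCB$, $\StoneCTB$, $\StoneCSfour$, $\StoneCSfive$ respectively). By \cref{thm: continuous Stone} there are objects $X, X' \in \mathbf{D}$ and morphisms $f, g \colon X \to X'$ whose equalizer does not exist in $\mathbf{D}$. Transporting across the dual equivalence, the images of $f$ and $g$ are a parallel pair in $\C$ whose coequalizer cannot exist, since the existence of such a coequalizer would, applying the equivalence in the other direction, produce the missing equalizer in $\mathbf{D}$.

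For the consequence, I invoke the fact already recalled in the introduction (from \cite[Thm.~IV.2.1.3 and IV.2.2.3]{SR99}) that every prevariety, viewed as a category, is cocomplete. Since each of $\BAOst$, $\T$, $\Kfour$, $\B$, $\TB$, $\CA$, $\MA$ has been shown to lack some coequalizers, none of them is cocomplete, hence none is equivalent to a prevariety.

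The argument is essentially formal, and there is no real obstacle; the only point deserving attention is the interchange of limits and colimits under a dual equivalence. This is standard: an equivalence of categories preserves and reflects every limit and colimit that exists, and the opposite of an equalizer diagram is a coequalizer diagram in the opposite category, so a dual equivalence sends (non-)equalizers to (non-)coequalizers. With this in hand, the corollary follows directly by specializing to each of the seven dual pairs listed in \cref{thm: JT for BAOs}.
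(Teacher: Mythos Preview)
Your proposal is correct and matches the paper's approach: the paper simply says that putting \cref{thm: continuous} together with \cref{thm: JT for BAOs} immediately gives the result, and you have spelled out exactly this argument, including the standard observation that a dual equivalence carries (non-)equalizers to (non-)coequalizers and the appeal to cocompleteness of prevarieties for the final consequence.
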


\section{Heyting algebras and frames with lattice morphisms}\label{sec: 5}

In this final section we first show that the category of frames with Heyting morphisms lacks some binary copowers. We then utilize Priestley duality for bounded distributive lattices to show that the category of Heyting algebras with bounded lattice morphisms lacks some coequalizers and its full subcategory consisting of frames lacks some binary copowers. Thus, none of these categories is equivalent to a prevariety.

We recall that a bounded distributive lattice $A$ is a \emph{Heyting algebra} if the set 
\[
\{ x \in A : a \wedge x \le b \}
\]
has a largest element for each $a,b \in A$. A \emph{frame} is a complete Heyting algebra (that is, a complete lattice satisfying the join-infinite distributive law 
\[
a \wedge\bigvee S = \bigvee \{ a\wedge s : s \in S \}
\]
for each $a\in A$ and $S \subseteq A$).

\begin{definition}
Let $\DLat$ denote the category of bounded distributive lattices with bounded lattice morphisms, and let $\Heyt$ denote the category of Heyting algebras with Heyting morphisms.
\end{definition}

We also consider the following categories:

\begin{definition}
    Let $\HeytBL$ denote the full subcategory of $\DLat$ consisting of Heyting algebras, $\FrmBL$ the full subcategory of $\HeytBL$ consisting of frames, and $\FrmHA$ the full subcategory of $\Heyt$ consisting of frames.    
\end{definition}

\begin{remark}
    The category $\HeytBL$ is different from $\Heyt$ in that the morphisms in $\HeytBL$ are bounded lattice morphisms, which in general do not preserve Heyting implication. Similarly, $\FrmHA$ and $\FrmBL$ have the same objects, but differ at the morphism level, and both are different from the category of frames and frame morphisms.
\end{remark}

By \cite[Cor.~IX.5.4]{BD74}, there is a reflector $\mathcal{R} \colon \Heyt \to \BA$, which restricts to a reflector $\mathcal{R} \colon \FrmHA \to \cBA$ by \cite[Thm.~VIII.4.4]{BD74}. Since $\cBA$ is a full subcategory of $\FrmHA$, $\mathcal{R}$ is essentially surjective by \cite[Thm.~I.18.4]{BD74}. Because $\mathcal{R}$ is a left adjoint, it preserves colimits. Thus, Corollary~\ref{cor:no-bin-prod-if-surjects} applies, yielding the following:

\begin{theorem}\label{Thm: FrmHA not cocomplete}
    $\FrmHA$ lacks some binary copowers, and hence is not equivalent to a prevariety.
\end{theorem}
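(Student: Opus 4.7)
The plan is to invoke \cref{l:no-bin-prod-if-surjects} with $\mathbf{C} = \FrmHA$ and with the reflector of the inclusion $\cBA \hookrightarrow \FrmHA$ playing the role of $\mathcal U$. The first step is to assemble this reflector: starting from the reflection $\Heyt \to \BA$ given by \cite[Cor.~IX.5.4]{BD74} (which sends a Heyting algebra to its boolean quotient, typically the algebra of regular elements), one combines it with the reflection described in \cite[Thm.~VIII.4.9]{BD74} to ensure that on a frame the resulting boolean algebra is in fact complete. This produces a left adjoint $\mathcal R \colon \FrmHA \to \cBA$ to the inclusion functor.

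Once $\mathcal R$ is in hand, both hypotheses of \cref{l:no-bin-prod-if-surjects} are immediate. As a left adjoint, $\mathcal R$ preserves all colimits; and as the reflector of a full reflective subcategory, its restriction to $\cBA$ is naturally isomorphic to the identity, so in particular $\mathcal R$ is essentially surjective onto $\cBA$. Applying \cref{l:no-bin-prod-if-surjects} then yields that $\FrmHA$ lacks some binary copowers, hence is not cocomplete, and so, as observed in the introduction, cannot be equivalent to a prevariety.

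The substantive input here is not the final step but \cref{c:lacking-binary-coproducts} (deduced via Stone duality from the fact that $\beta\N\times\beta\N$ is not extremally disconnected): the failure of binary copowers in $\cBA$ is what \emph{gets transported back} along the colimit-preserving essentially surjective functor $\mathcal R$ to $\FrmHA$. Correspondingly, the only delicate point in the proof is to verify that the classical constructions of \cite{BD74} really do land in $\cBA$ and not merely in $\BA$; once the completeness of the target is confirmed, the rest is formal.
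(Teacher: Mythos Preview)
Your proposal is correct and follows essentially the same route as the paper: both arguments establish that $\cBA$ is a reflective subcategory of $\FrmHA$ by combining \cite[Cor.~IX.5.4]{BD74} with \cite[Thm.~VIII.4.9]{BD74}, then observe that the reflector is a colimit-preserving, essentially surjective functor to which \cref{l:no-bin-prod-if-surjects} applies. Your added remarks tracing the obstruction back to \cref{c:lacking-binary-coproducts} and flagging the completeness check as the only nontrivial point are accurate elaborations, but the core argument is identical.
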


It remains to prove that $\HeytBL$ and $\FrmBL$ are not cocomplete, which we do by using Priestley duality and its restrictions to Heyting algebras and frames.

\begin{definition}
    \hfill
    \begin{enumerate}
    
        \item
        A \emph{Priestley space} is a Stone space $X$ equipped with a partial order $\le$ such that for every $x,y \in X$, if $x\not\le y$, then there is a clopen upset $U$ such that $x\in U$ and $y\notin U$.
        
        \item Let $\Pries$ denote the category of Priestley spaces and continuous order-preserving maps between them.
        
    \end{enumerate}
\end{definition}

It is well known (see, e.g., \cite[Prop.~2.6]{Pri84}) that if $X$ is a Priestley space, then $\le$ is a closed relation on $X$. Therefore, $\Pries$ is a full subcategory of $\KHausR$ because $\Pries$-morphisms are precisely stable maps between Priestley spaces viewed as objects of $\KHausR$.  

\begin{theorem} {\emph{(Priestley duality \cite{Pri70})}} 
    $\DLat$ is dually equivalent to $\Pries$.
\end{theorem}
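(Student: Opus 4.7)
The plan is to exhibit a contravariant equivalence by constructing functors in both directions together with natural isomorphisms between their composites and the identities. For a bounded distributive lattice $L$, define $X_L$ to be the set of prime filters of $L$, ordered by inclusion, topologized with the subbase $\{\hat{a} : a \in L\} \cup \{X_L \setminus \hat{a} : a \in L\}$, where $\hat{a} = \{F \in X_L : a \in F\}$; a morphism $f \colon L \to M$ is sent to $f^* \colon X_M \to X_L$ defined by $f^*(F) = f^{-1}(F)$. In the other direction, for a Priestley space $X$, let $\mathcal{CU}(X)$ denote the set of clopen upsets of $X$, which forms a bounded distributive lattice under the set-theoretic operations with bounds $\emptyset$ and $X$; a Priestley morphism $g \colon X \to Y$ is sent to $g^* \colon \mathcal{CU}(Y) \to \mathcal{CU}(X)$ by $g^*(U) = g^{-1}(U)$. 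Functoriality in both directions is immediate from the definitions.

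The first nontrivial task is to verify that $X_L$ is a Priestley space. Zero-dimensionality is built into the subbase, since each $\hat{a}$ is clopen. Hausdorffness and the Priestley separation axiom both reduce to the prime filter theorem: if $F \not\subseteq G$, pick $a \in F \setminus G$, and then $\hat{a}$ is a clopen upset separating them. Compactness follows from the Alexander subbase theorem applied to the chosen subbase, using the prime filter theorem to extend any family of complements of subbasic sets with the finite intersection property to a prime filter. Continuity and order-preservation of $f^*$ are immediate from $(f^*)^{-1}(\hat{a}) = \widehat{f(a)}$ and the fact that preimages preserve inclusion.

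The final step is to define natural isomorphisms $\eta_L \colon L \to \mathcal{CU}(X_L)$ by $a \mapsto \hat{a}$ and $\epsilon_X \colon X \to X_{\mathcal{CU}(X)}$ by $x \mapsto \{U \in \mathcal{CU}(X) : x \in U\}$. Each $\hat{a}$ is a clopen upset, $\eta_L$ is a lattice homomorphism, and injectivity of $\eta_L$ follows from the prime filter theorem. Surjectivity of $\eta_L$ uses compactness: for a clopen upset $U \subseteq X_L$ and $F \in U$, for each $G \notin U$ one has $F \not\subseteq G$, so some $a \in F \setminus G$ gives $F \in \hat{a}$ with $G \notin \hat{a}$; compactness of the clopen set $X_L \setminus U$ produces a single $\hat{a_F} \subseteq U$ containing $F$, and then compactness of $U$ together with primeness yields $U = \widehat{a_1 \vee \cdots \vee a_n}$. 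The map $\epsilon_X$ is manifestly continuous and order-preserving, and injective by the Priestley separation axiom. The main obstacle, and most delicate step, is surjectivity of $\epsilon_X$: given a prime filter $P$ of $\mathcal{CU}(X)$, one must produce $x \in X$ whose clopen-upset neighborhood filter is exactly $P$. This is proved by a finite intersection / compactness argument showing that the set $\bigcap_{U \in P} U \cap \bigcap_{V \notin P}(X \setminus V)$ is nonempty, with the Priestley separation axiom ensuring uniqueness of the resulting point. Naturality of $\eta$ and $\epsilon$ is then a routine diagram chase.
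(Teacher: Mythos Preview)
The paper does not prove this theorem: it is stated as a classical result and attributed to Priestley \cite{Pri70} without further argument. Your sketch is the standard proof of Priestley duality and is essentially correct; the constructions of the spectrum of prime filters, the lattice of clopen upsets, and the two natural isomorphisms are all accurate, and the key uses of the prime filter theorem, the Alexander subbase lemma, and compactness/FIP arguments are placed correctly. One small wording point: in the surjectivity of $\epsilon_X$, you do not need Priestley separation to ensure uniqueness of the witness point---any point in the intersection works for surjectivity, and uniqueness already follows from the injectivity you established; otherwise the argument is sound.
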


\begin{definition}
    \hfill
    \begin{enumerate}
        \item An \emph{Esakia space} is a Priestley space in which the downset of every clopen subset is clopen.
        \item A \emph{localic space} \textup{(}\emph{L-space}, for short\textup{)} is a Priestley space in which the closure of any open upset is a clopen upset.
        \item Let $\Esast$ and $\LPries$ denote the full subcategories of $\Pries$ consisting of Esakia spaces and L-spaces, respectively.
    \end{enumerate}
\end{definition}

\begin{remark}
    It is well known that each L-space is an Esakia space. This, for example, can be seen by observing that a Priestley space is an Esakia space if and only if the closure of each open upset is an upset (see, e.g., \cite[Lem.~4.2]{BGJ13}).
\end{remark}

It follows from the results of \cite{Esa74,PS88} that Priestley duality restricts to $\HeytBL$ and $\FrmBL$:

\begin{theorem}\label{thm: duality FrmBL and HeytBL}
Priestley duality restricts to a dual equivalence between $\HeytBL$ and $\Esast$ and to a dual equivalence between $\FrmBL$ and $\LPries$. 
\end{theorem}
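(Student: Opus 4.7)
The plan is to reduce the statement entirely to classical Priestley duality $\DLat \simeq \Pries^{\op}$, which on objects sends a bounded distributive lattice $A$ to its Priestley space $X_A$ of prime filters (topologized by the subbasis $\{\hat a, X_A \setminus \hat a : a \in A\}$ and ordered by inclusion), and on morphisms sends a bounded lattice morphism $h \colon A \to B$ to the continuous order-preserving map $h^{-1} \colon X_B \to X_A$. Since $\HeytBL$ and $\FrmBL$ are \emph{full} subcategories of $\DLat$, and $\Esa$ and $\LPries$ are \emph{full} subcategories of $\Pries$, it suffices to check that the Priestley equivalence matches the relevant classes of objects on the two sides; the morphism correspondence is then inherited for free.

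For the first restriction, I would invoke Esakia's theorem \cite{Esa74}: a bounded distributive lattice $A$ is a Heyting algebra if and only if the downset $\downarrow C$ of every clopen subset $C$ of $X_A$ is clopen, i.e.\ if and only if $X_A$ is an Esakia space. Dually, the lattice of clopen upsets of a Priestley space $X$ is a Heyting algebra exactly when $X$ is an Esakia space. Combined with the fullness observation, this delivers the restricted dual equivalence $\HeytBL \simeq \Esa^{\op}$. For the second restriction, I would invoke the analogous characterization of Pultr and Sichler \cite{PS88}: a Heyting algebra $A$ is a frame (equivalently, is complete) if and only if $X_A$ is an L-space, that is, the closure of every open upset of $X_A$ is again a clopen upset. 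Fullness then immediately yields $\FrmBL \simeq \LPries^{\op}$.

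The proof is therefore essentially bookkeeping; the substantive content resides in the two cited theorems. The only point that deserves explicit comment, and which I would flag briefly, is that one must use bounded lattice morphisms consistently: classical Esakia duality identifies $\Heyt$ (Heyting morphisms) with Esakia spaces and $p$-morphisms, whereas here the morphism class on the algebra side is enlarged to all bounded lattice morphisms, with the dual morphism class correspondingly enlarged to all continuous order-preserving maps, as built into the definitions of $\Esa$ and $\LPries$. There is no real obstacle: once this alignment is made explicit, the result follows by citing \cite{Esa74,PS88} and appealing to the fullness of the subcategories involved.
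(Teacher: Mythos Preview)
Your proposal is correct and matches the paper's approach exactly: the paper gives no explicit proof but simply states that the result ``follows from the results of \cite{Esa74,PS88},'' which is precisely the reduction you outline (object-level characterizations from Esakia and Pultr--Sichler, with morphisms handled by fullness). Your explicit remark about the enlarged morphism classes is a helpful clarification that the paper leaves implicit.
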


\begin{theorem}\label{thm: LPries and Esa not complete}
\hfill\begin{enumerate}
\item\label{thm: LPries and Esa not complete: 1} $\Esast$ is not complete as it lacks some equalizers.
\item\label{thm: LPries and Esa not complete: 2} $\LPries$ is not complete as it lacks some binary powers.
\end{enumerate}
\end{theorem}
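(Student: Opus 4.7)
The plan is to reduce both claims to the fact that the full-subcategory inclusions $\Esa \hookrightarrow \Pries$ and $\LPries \hookrightarrow \Pries$ preserve limits, and then to exhibit a concrete diagram whose $\Pries$-limit does not land in the smaller category. Since $\Pries$ is complete (being dual to the variety $\DLat$), this will force the corresponding limit in $\Esa$ or $\LPries$ not to exist. Preservation of limits is an application of \cref{thm: general} with $\C = \Pries$ and $\pazocal{H}$ consisting of the singleton Priestley space and the reflexive two-element chain $\tworef$, both of which lie in $\Esa$ and $\LPries$. A $\Pries$-morphism is an isomorphism precisely when it is a continuous bijection that reflects the partial order; the singleton detects bijectivity and $\tworef$ (together with the singleton) detects order reflection, exactly as in the proof of \cref{thm: preservation limits}.

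For \eqref{prop: LPries and Esa not complete: 1}, I would build an Esakia space $Z$ with underlying set $\{a_n, b_n, c_n : n \in \mathbb{N}\} \cup \{a_\infty, b_\infty\}$, in which the indexed points are isolated, $a_n \to a_\infty$ and $b_n, c_n \to b_\infty$, and the partial order is generated by $a_n < b_n$, $a_n < c_n$, and $a_\infty < b_\infty$. A short inspection of the clopens of $Z$ (Boolean combinations of isolated singletons with the two natural cofinite neighbourhoods of the limit points) shows that $Z$ is a Priestley space and that the downset of every clopen is clopen, making $Z$ an Esakia space. Let $f = \id_Z$ and let $g \colon Z \to Z$ be the involution swapping $b_n \leftrightarrow c_n$ for each $n$ and fixing every other point; both are continuous and order-preserving, hence $\Esa$-morphisms. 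Their equalizer in $\Pries$ is the subspace $Y = \{a_n : n \in \mathbb{N}\} \cup \{a_\infty, b_\infty\}$ with the restricted order (only $a_\infty < b_\infty$ survives). In $Y$ the singleton $\{b_\infty\}$ is clopen, since every $Z$-neighbourhood of $b_\infty$ meets $Y$ only at $b_\infty$, yet $\downarrow \{b_\infty\} = \{a_\infty, b_\infty\}$ is not open because every neighbourhood of $a_\infty$ in $Y$ contains cofinitely many $a_n$'s. Hence $Y$ is not Esakia, and by the preservation result the equalizer of $f, g$ does not exist in $\Esa$.

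For \eqref{prop: LPries and Esa not complete: 2}, the argument is shorter. Endow $\beta\mathbb{N}$ with the discrete partial order: it is a Stone space in which every subset is an upset, and since $\beta\mathbb{N}$ is extremally disconnected the closure of every open upset is clopen, so $\beta\mathbb{N}$ is an $L$-space. Its binary power in $\Pries$ is $\beta\mathbb{N} \times \beta\mathbb{N}$ with the discrete componentwise order, but this product is not extremally disconnected by the classical fact already used in the proof of \cref{thm: ED lacks products}. Hence some open set---equivalently some open upset in the discrete order---has non-clopen closure, so $\beta\mathbb{N} \times \beta\mathbb{N}$ is not an $L$-space, and preservation of limits forces the binary power of $\beta\mathbb{N}$ not to exist in $\LPries$.

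The main delicate step I anticipate is the verification that the space $Z$ in \eqref{prop: LPries and Esa not complete: 1} really is Esakia, i.e.\ that $\downarrow U$ is clopen for every clopen $U \subseteq Z$. This reduces to checking a handful of basic clopens, but is the only point that is not automatic from the general preservation-of-limits machinery; the rest is bookkeeping in sequential Stone spaces.
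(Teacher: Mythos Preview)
Your proposal is correct and follows essentially the same strategy as the paper: both parts rely on the limit-preservation principle of \cref{thm: general} (detecting isomorphisms via the singleton and $\tworef$), the same $\beta\N \times \beta\N$ obstruction for \eqref{prop: LPries and Esa not complete: 2}, and the same ``isolated upper limit point over a non-isolated lower limit point'' phenomenon for \eqref{prop: LPries and Esa not complete: 1}. The only cosmetic differences are that the paper takes $\KHausR$ rather than $\Pries$ as the ambient complete category (invoking \cref{thm: preservation limits: reflexive} directly) and, for \eqref{prop: LPries and Esa not complete: 1}, uses two distinct maps $f,g \colon X \to X'$ between two different Esakia spaces (your $Z$ is isomorphic to the paper's $X'$, and your involution packages the paper's $f$ and $g$ into a single endomorphism whose fixed-point set is the same problematic equalizer).
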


\begin{proof}
    \eqref{thm: LPries and Esa not complete: 1}. It is sufficient to observe that the spaces considered in the proof of Theorem~\ref{thm: continuous StoneC} are Esakia spaces, hence the same proof yields that $\Esast$ lacks some equalizers.

    \eqref{thm: LPries and Esa not complete: 2}. 
    We view each ED-space as an L-space where the relation is the identity relation. We claim that $\LPries$ lacks the product of $\beta \N$ with itself. By Theorem~\ref{thm: preservation limits: reflexive}, the inclusion $\LPries \hookrightarrow \KHausR$ preserves limits. Therefore, if the product of $\beta\mathbb N$ with itself existed in $\LPries$, it would be the topological product $\beta\N \times \beta \N$ by the proof of Proposition~\ref{Prop: KHausR complete}, which is a contradiction since $\beta\N \times \beta \N$ is not extremally disconnected. 
\end{proof}

Putting Theorems~\ref{thm: duality FrmBL and HeytBL} and \ref{thm: LPries and Esa not complete} together yields: 

\begin{corollary}\label{Cor: HeytBL and FrmBL not cocomplete}
    \hfill
    \begin{enumerate}
    
        \item\label{Cor: HeytBL and FrmBL not cocomplete : 1}
        $\HeytBL$ is not cocomplete as it lacks some coequalizers. Thus, $\HeytBL$ is not equivalent to a prevariety.
        
        \item\label{Cor: HeytBL and FrmBL not cocomplete : 2}
        $\FrmBL$ is not cocomplete as it lacks some binary copowers. Thus, $\FrmBL$ is not equivalent to a prevariety.
    \end{enumerate}
\end{corollary}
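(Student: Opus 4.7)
The plan is straightforward: combine \cref{thm: duality FrmBL and HeytBL}, which provides the dual equivalences $\HeytBL \simeq \Esa^{\op}$ and $\FrmBL \simeq \LPries^{\op}$, with \cref{prop: LPries and Esa not complete}, which exhibits the relevant limit failures on the topological side. Because any dual equivalence of categories takes limits to colimits and vice versa, a failure of a specific equalizer in $\Esa$ transports to a failure of the corresponding coequalizer in $\HeytBL$, and a failure of a specific binary power in $\LPries$ transports to a failure of the corresponding binary copower in $\FrmBL$. Once these failures are exhibited, the second halves of both statements follow from the fact, recalled in the Introduction, that every prevariety is cocomplete (see \cite[Thm.~IV.2.1.3 and IV.2.2.3]{SR99}).

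For part (1), I would take the parallel pair of $\Esa$-morphisms $f,g\colon X \to X'$ used in the proof of \cref{prop: LPries and Esa not complete: 1} and transport it across $\HeytBL \simeq \Esa^{\op}$ to a parallel pair in $\HeytBL$. A coequalizer of this pair in $\HeytBL$ would correspond, under the dual equivalence, to an equalizer of $f$ and $g$ in $\Esa$; since the latter does not exist, neither does the former. Hence $\HeytBL$ is not cocomplete and therefore cannot be equivalent to a prevariety.

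For part (2), the same mechanism applies, with $\LPries$ in place of $\Esa$. The pair of $L$-spaces witnessing the absence of a binary power in \cref{prop: LPries and Esa not complete: 2} corresponds, under $\FrmBL \simeq \LPries^{\op}$, to a single object of $\FrmBL$ whose binary copower fails to exist. Consequently $\FrmBL$ is not cocomplete and is not equivalent to a prevariety.

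There is essentially no obstacle: both assertions are formal consequences of the two inputs. The only point that deserves a brief mention is the standard categorical fact that a contravariant equivalence swaps limit-type constructions with colimit-type constructions of the opposite shape, so that the nonexistence of an equalizer (respectively a binary power) on the spatial side is genuinely equivalent to the nonexistence of a coequalizer (respectively a binary copower) on the algebraic side.
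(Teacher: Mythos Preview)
Your proposal is correct and matches the paper's approach exactly: the paper simply states that the corollary follows by putting \cref{thm: duality FrmBL and HeytBL,prop: LPries and Esa not complete} together, and your write-up spells out this transport along the dual equivalences in the expected way.
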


The table below summarizes the main results of this section.

\begin{table}[h!]
    \centering
    \begin{tabular}{|l|l|l|l|}
        \hline
        \textbf{Category} & \textbf{Objects} & \textbf{Morphisms} & \textbf{Location} \\
        \hline
        $\HeytBL$ & Heyting algebras & Bounded lattice morphisms & Cor.~\ref{Cor: HeytBL and FrmBL not cocomplete}\eqref{Cor: HeytBL and FrmBL not cocomplete : 1}\\
        $\FrmBL$ & Frames & Bounded lattice morphisms & Cor.~\ref{Cor: HeytBL and FrmBL not cocomplete}\eqref{Cor: HeytBL and FrmBL not cocomplete : 2} \\
        $\FrmHA$ & Frames & Heyting morphisms & Thm.~\ref{Thm: FrmHA not cocomplete}\\
        \hline
    \end{tabular}
    \vspace{3mm}
    \caption{Categories of Heyting algebras and frames that are not cocomplete.}\label{table 6}
\end{table}

\section*{Acknowledgments}
We would like to thank the referee for useful comments on the structure of the paper, which have improved the readability. 
We are also grateful to all the participants of the \href{https://sites.google.com/view/frametheoryseminar/frame-theory-workshop?authuser=0}{Frame Theory Workshop} (Chapman University, December 2024).
The first author was funded by UK Research and Innovation (UKRI) under the UK government’s Horizon Europe funding guarantee (grant number EP/Y015029/1, Project ``DCPOS'') during his affiliation at the University of Birmingham and by an FSR Incoming Postdoctoral Fellowship during his affiliation at the Université catholique de Louvain.


\begin{thebibliography}{BdRV01}

\bibitem[AHS06]{AHS06}
J.~Ad{\'a}mek, H.~Herrlich, and G.~E. Strecker.
\newblock Abstract and concrete categories: the joy of cats.
\newblock {\em Repr. Theory Appl. Categ.}, (17):1--507, 2006.

\bibitem[BBI16]{BBI16}
G.~Bezhanishvili, N.~Bezhanishvili, and R.~Iemhoff.
\newblock Stable canonical rules.
\newblock {\em J. Symb. Log.}, 81(1):284--315, 2016.

\bibitem[BD74]{BD74}
R.~Balbes and P.~Dwinger.
\newblock {\em Distributive lattices}.
\newblock University of Missouri Press, Columbia, Mo., 1974.

\bibitem[BdRV01]{BRV01}
P.~Blackburn, M.~de~Rijke, and Y.~Venema.
\newblock {\em Modal logic}, volume~53 of {\em Cambridge Tracts in Theoretical Computer Science}.
\newblock Cambridge University Press, Cambridge, 2001.

\bibitem[BGJ13]{BGJ13}
G.~Bezhanishvili, D.~Gabelaia, and M.~Jibladze.
\newblock Funayama's theorem revisited.
\newblock {\em Algebra Universalis}, 70(3):271--286, 2013.

\bibitem[BK24]{BK24}
G.~Bezhanishvili and A.~Kornell.
\newblock The category of topological spaces and open maps does not have products.
\newblock {\em Adv. Math.}, 458:Paper No. 109963, 2024.

\bibitem[BR23]{BR23}
G.~Bezhanishvili and R.~Raviprakash.
\newblock Mc{K}insey-{T}arski algebras: an alternative pointfree approach to topology.
\newblock {\em Topology Appl.}, 339:Paper No. 108689, 2023.

\bibitem[CZ97]{CZ97}
A.~Chagrov and M.~Zakharyaschev.
\newblock {\em Modal logic}.
\newblock Oxford University Press, New York, 1997.

\bibitem[dJ80]{DeJ80}
D.~H.~J. de~Jongh.
\newblock A class of intuitionistic connectives.
\newblock In {\em The {K}leene {S}ymposium ({P}roc. {S}ympos., {U}niv. {W}isconsin, {M}adison, {W}is., 1978)}, volume 101 of {\em Stud. Logic Found. Math.}, pages 103--111. North-Holland, Amsterdam-New York, 1980.

\bibitem[Esa74]{Esa74}
L.~L. Esakia.
\newblock Topological {K}ripke models.
\newblock {\em Soviet Math. Dokl.}, 15:147--151, 1974.

\bibitem[Gai64]{Gai64}
H.~Gaifman.
\newblock Infinite {B}oolean polynomials. {I}.
\newblock {\em Fund. Math.}, 54:229--250, 1964.

\bibitem[GH09]{GH09}
S.~Givant and P.~Halmos.
\newblock {\em Introduction to {B}oolean algebras}.
\newblock Undergraduate Texts in Mathematics. Springer, New York, 2009.

\bibitem[Ghi10]{Ghi10}
S.~Ghilardi.
\newblock Continuity, freeness, and filtrations.
\newblock {\em J. Appl. Non-Classical Logics}, 20(3):193--217, 2010.

\bibitem[GJ60]{GJ60}
L.~Gillman and M.~Jerison.
\newblock {\em Rings of continuous functions}.
\newblock The University Series in Higher Mathematics. D. Van Nostrand Co., Inc., Princeton, N.J.-Toronto-London-New York, 1960.

\bibitem[Hal56]{Hal56}
P.~R. Halmos.
\newblock Algebraic logic. {I}. {M}onadic {B}oolean algebras.
\newblock {\em Compositio Math.}, 12:217--249, 1956.

\bibitem[Hal64]{Hal64}
A.~W. Hales.
\newblock On the non-existence of free complete {B}oolean algebras.
\newblock {\em Fund. Math.}, 54:45--66, 1964.

\bibitem[Joh82]{Joh82}
P.~T. Johnstone.
\newblock {\em Stone spaces}, volume~3 of {\em Cambridge Studies in Advanced Mathematics}.
\newblock Cambridge University Press, Cambridge, 1982.

\bibitem[JT51]{JT51}
B.~J{\'o}nsson and A.~Tarski.
\newblock Boolean algebras with operators. {I}.
\newblock {\em Amer. J. Math.}, 73:891--939, 1951.

\bibitem[MT44]{MT44}
J.~C.~C. McKinsey and A.~Tarski.
\newblock The algebra of topology.
\newblock {\em Ann. of Math.}, 45:141--191, 1944.

\bibitem[Pri70]{Pri70}
H.~A. Priestley.
\newblock Representation of distributive lattices by means of ordered {S}tone spaces.
\newblock {\em Bull. London Math. Soc.}, 2:186--190, 1970.

\bibitem[Pri84]{Pri84}
H.~A. Priestley.
\newblock Ordered sets and duality for distributive lattices.
\newblock In {\em Orders: description and roles ({L}'{A}rbresle, 1982)}, volume~99 of {\em North-Holland Math. Stud.}, pages 39--60. North-Holland, Amsterdam, 1984.

\bibitem[PS88]{PS88}
A.~Pultr and J.~Sichler.
\newblock Frames in {P}riestley's duality.
\newblock {\em Cah. Topol. G\'{e}om. Diff\'{e}r. Cat\'{e}g.}, 29(3):193--202, 1988.

\bibitem[Rie16]{Riehl2016}
E.~Riehl.
\newblock {\em Category theory in context}.
\newblock Mineola, NY: Dover Publications, 2016.

\bibitem[RS70]{RS70}
H.~Rasiowa and R.~Sikorski.
\newblock {\em The mathematics of metamathematics}, volume~41.
\newblock PWN---Polish Scientific Publishers, Warsaw, third edition, 1970.
\newblock Monografie Matematyczne.

\bibitem[SR99]{SR99}
J.~D.~H. Smith and A.~B. Romanowska.
\newblock {\em Post-modern algebra}.
\newblock Pure and Applied Mathematics. John Wiley \& Sons, Inc., New York, 1999.

\end{thebibliography}
\end{document}